\newcommand{\ind}{1{\hskip -2.5 pt}\hbox{I}}
\numberwithin{equation}{section}
\newtheorem{theorem}{Theorem}[section]
\newtheorem{lemma}[theorem]{Lemma}
\newtheorem{proposition}[theorem]{Proposition}
\newtheorem{definition}[theorem]{Definition}
\def\beqlb{\begin{eqnarray}}\def\eeqlb{\end{eqnarray}}
 \def\beqnn{\begin{eqnarray*}}\def\eeqnn{\end{eqnarray*}}
\def\be{{\beta}}
\def\<{\left<}\def\>{\right>}
\def\({\left(}\def\){\right)}
\def\[{\left[}\def\]{\right]}
 \def\red{\color{red}}
 \def\beqlb{\begin{eqnarray}}\def\eeqlb{\end{eqnarray}}
 \def\beqnn{\begin{eqnarray*}}\def\eeqnn{\end{eqnarray*}}
 \def\d{{\mbox{\rm d}}}
\def\ar{\!\!&}\def\nnm{\nonumber}\def\ccr{\nnm\\}
\begin{document}

\title{Mutually interacting superprocesses with migration}
\thanks{The research of L. Ji was supported in part by the fellowship of China Postoctoral Science Foundation  2020M68194; the research of H. Liu was supported in part by NSF of Hebei Province A2019205299, Hebei Education Department QN2019073, NSFC 11501164 and HNU L2019Z01; the research of J. Xiong  was supported in part by SUSTech fund Y01286120 and NSFC grants 61873325 and 11831010.}
\author{Lina Ji, Huili Liu and Jie Xiong}
\address{Lina Ji: Department of Mathematics, Southern University of Science and Technology}
\email{jiln@sustch.edu.cn}
\address{Huili Liu: School of Mathematical Sciences, Hebei Normal University,
Shijiazhuang, Hebei, China}
\email{liuhuili@hebtu.edu.cn}
\address{Jie Xiong: Department of Mathematics and National Center for Applied Mathematics (Shenzhen), Southern University of Science and Technology}
\email{xiongj@sustch.edu.cn} \subjclass[2010]{Primary 60J68; Secondary 60H15; 60K35}
\date{\today}
\keywords{Distribution-function-valued; Martingale problem; Superprocess; Stochastic partial differential equation; Well-posedness }
\begin{abstract}
A system of mutually interacting superprocesses with migration is constructed as the limit of a sequence of branching particle systems arising from population models. The uniqueness in law of the superprocesses is established using the pathwise uniqueness of a system of stochastic partial differential equations with  non-Lipschitz coefficients, which is satisfied  by the corresponding system of distribution-function-valued processes.

\end{abstract}
\maketitle \pagestyle{myheadings} \markboth{\textsc{superprocesses with one-way migration}} {\textsc{superprocesses with one-way migration}}
\section{Introduction}
Superprocesses, describing the evolution of large population undergoing random reproduction and spatial motion, were first constructed as high-density limits of branching particle systems by  \cite{W68}. The connection between superprocesses with stochastic evolution equations was investigated by \cite{D75}. Since then ample systematic research results have been published, see e.g., \cite{Dawson, Etheridge, Li2011}. The ones with immigration, 
 a class of generalizations of superprocesses, have also attracted the interest of many researchers.
We refer to  \cite{Li2011, LS1995, LXZ2010} and the references therein for immigration structure and related properties.
 Let $M_{F}\(\mathbbm{R}\)$ be the collection of all finite Borel measures on $\mathbbm{R}$. Denote by $C_{b}^k\(\mathbbm{R}\)$ (resp. $C_{0}^k\(\mathbbm{R}\)$) the collection of all bounded (resp. compactly-supported) continuous functions on $\mathbbm{R}$ with bounded derivatives up to $k$th order. We consider a continuous $M_{F}\(\mathbbm{R}\)$-valued process $\(\mu_t\)_{t\geq 0},$ solving the following martingale problem (MP): $\forall\,f\in C_{b}^2\(\mathbbm{R}\)$, the process
\begin{equation}\label{cmp1}
M_t^f=\< \mu_t,f\>-\< \mu_0,f\>-\int_0^t\(\frac12\< \mu_s,f^{''}\> +\<\kappa,f\>\)ds
\end{equation}
is a continuous martingale with quadratic variation process
\begin{equation}\label{cmp2}
\< M^f\>_t=\gamma\int_0^t\< \mu_s,f^2\> ds
\end{equation}
where $\gamma>0$ and $\kappa\in M_{F}\(\mathbbm{R}\)$.
The corresponding model is super-Brownian motion (SBM) when $\kappa=0$. The uniqueness in law of SBM can be obtained by its log-Laplace equation. \cite{X2013} studied the stochastic partial differential equation (SPDE) satisfied by the distribution-function-valued process, and 
approached the uniqueness of SBM
from a different point of view. Related work can also be found in \cite{DL2012AP, HLY}.

For  a finite measure $\kappa$, the corresponding model is superprocess with immigration, which was constructed in \cite{Li1992} through the cumulant semigroup, see also \cite{Li1996, LS1995}.
In case of $\kappa$ being interactive, i.e., $\kappa=\kappa\(\mu_s\)$,
the existence of solution to MP (\ref{cmp1}, \ref{cmp2}) has been verified in \cite{M92}, where the result
is applicable to the situation with interactive immigration, branching rate and spatial motion.
By the approach of pathwise uniqueness for SPDEs satisfied by the distribution-function-valued process, \cite{MX2015}
established the well-posedness of MP for superprocess with interactive immigration. See also \cite{XY2016} for the related work.

However, there exist some populations distributed in different colonies, such as the mutually catalytic branching model,
see \cite{DP1998AP, D2002AP, D2002EJP,M96, M1998}.
The evolution of  such a model can be depicted by interacting superprocesses. Some sudden event may induce mass migration and lead to an increment of population size in one colony while a decrement in the other. For instance, war makes large numbers of people move into neighboring country and radiation mutates normal cells and so on. Therefore, it is natural to study the superprocesses with
interactive migration
between different colonies.  In this paper we  consider a continuous $M_{F}\(\mathbbm{R}\)^2$-valued process $\(\mu_t^{\mathbbm{1}},\mu_t^{\mathbbm{2}}\)_{t\geq0}$, called as {\it mutually interacting superprocesses with migration}.
It solves the following 
MP: $\forall\ f, g\in C_{0}^2\(\mathbbm{R}\)$, 
the processes
\begin{equation}\label{mp1}
\begin{cases}
M_t^{f}=\langle\mu_t^{\mathbbm{1}},f\rangle-\langle\mu_0^{\mathbbm{1}},f\rangle-\frac12 \int_0^t\langle\mu_s^{\mathbbm{1}},f^{''}\rangle ds
-b_{\mathbbm{1}}\int_0^t\langle\mu_s^{\mathbbm{1}},f\rangle ds+\int_0^t\langle\mu_s^{\mathbbm{1}},\eta\(\cdot,\mu_s^{\mathbbm{1}},\mu_s^{\mathbbm{2}}\)
f\rangle ds\\
\hat{M}_t^{g}=\langle\mu_t^{\mathbbm{2}},g\rangle-\langle\mu_0^{\mathbbm{2}},g\rangle-\frac12 \int_0^t\langle\mu_s^{\mathbbm{2}},g^{''}\rangle ds
-b_{\mathbbm{2}}\int_0^t\langle\mu_s^{\mathbbm{2}},g\rangle ds-\langle\chi,g\rangle\int_0^t\langle\mu_s^{\mathbbm{1}}, \eta\(\cdot,\mu_s^{\mathbbm{1}},\mu_s^{\mathbbm{2}}\)\rangle ds\\
\end{cases}
\end{equation}
are two continuous martingales with quadratic variation and covariation processes
\begin{equation}\label{mp2}
\begin{cases}
\langle M^{f}\rangle_t=\gamma_{\mathbbm{1}}\int_0^t\langle\mu_s^{\mathbbm{1}},f^2\rangle ds\\
\langle{\hat{M}}^{g}\rangle_t=\gamma_{\mathbbm{2}}\int_0^t\langle \mu_s^{\mathbbm{2}},g^2\rangle ds\\
\langle{M^{f},\hat{M}}^{g}\rangle_t=0
\end{cases}
\end{equation}
where $\chi$ is a finite measure on $\mathbb{R}$; $\gamma_{\mathbbm{1}}$ and $\gamma_{\mathbbm{2}}$ are positive constants; the migration intensity $\eta\(\cdot,\cdot,\cdot\)\in C_b^{+}\(\mathbbm{R}\times M_F(\mathbbm{R})^2\)$ is a nonnegative bounded continuous functions on $\mathbbm{R} \times M_F(\mathbbm{R})^2$.

The {\it purpose} of this paper is to establish the well-posedness of MP  (\ref{mp1},\,\ref{mp2}), i.e.,
the existence and uniqueness in law of such a mutually interacting superprocesses with migration. The process is constructed as the limit of a sequence of branching particle systems. 
To obtain the uniqueness in law of the superprocesses, we demonstrate the pathwise uniqueness of the solution to a system of mutually interacting SPDEs with non-Lipschitz coefficients, which are satisfied by the corresponding distribution-function-valued processes. 
 As far as we know, this is the first attempt to discuss the well-posedness of mutually interactive superprocesses with  migration.

Now we present some notation. Let $D(\mathbbm{R}_+, M_F(\mathbbm{R})^2)$ (resp. $C(\mathbbm{R}_+, M_F(\mathbbm{R})^2)$) denote the space of c\'{a}dl\'{a}g (resp. continuous) paths from $\mathbbm{R}_+$ to $M_F(\mathbbm{R})^2$ furnished with the Skorokhod topology. Let $D(\mathbbm{R}_+, \mathbbm{R}^2)$ be the collection of  c\'{a}dl\'{a}g paths from $\mathbbm{R}_+$ to $\mathbbm{R}^2$. Recall that $C_b(\mathbbm{R})$ is the collection of all bounded continuous functions on $\mathbbm{R}.$ Let $C_{b,m}\(\mathbbm{R}\)$ be the subset of $C_b\(\mathbbm{R}\)$ consisting of nondecreasing bounded continuous functions on $\mathbbm{R}$. Write $\left<\mu,f\right>$ as the integral of $f \in C_b^2(\mathbbm{R})$ with respect to measure $\mu \in M_F(\mathbbm{R})$. For any $f, g \in C_b^2(\mathbbm{R}),$ define $\langle f, g \rangle_1 = \int_{\mathbbm{R}}f(x)g(x) d x.$
Let $H_0=L^2\(\mathbbm{R}\)$ be the Hilbert space consisting of all square-integrable functions with Hilbertian norm $\|\cdot\|_0$ given by $\|f\|_0^2=\int_{\mathbbm{R}}f^2\(x\)e^{-|x|}dx$
for any $f\in H_0$.
Set $v_i\(x\)=\nu_i\((-\infty,x]\)$
as the distribution functions of $\nu_i\in M_{F}\(\mathbbm{R}\)$ for any $x\in\mathbbm{R}$ and  $i=1,2$. Define distance $\rho$ on $M_{F}\(\mathbbm{R}\)$ by
\beqlb\label{rho}
\rho\(\nu_1,\nu_2\)=\int_{\mathbbm{R}}e^{-\mid x\mid }\mid v_1\(x\)-v_2\(x\)\mid dx.
\eeqlb
It is easy to see that, under metric distance $\rho,$ $M_{F}\(\mathbbm{R}\)$ is a Polish space whose topology coincides with that given by weak convergence of measures. Moreover, we always assume that all random variables in this paper are defined on the same filtered  probability space $(\Omega, \mathcal{F}, \mathcal{F}_t, \mathbbm{P}).$  Let $\mathbbm{E}$ be the corresponding expectation.

The rest of this paper is organized as follows. In Section \ref{se:model}, a sequence of branching particle systems arising from population models is introduced. In Section \ref{existence}, we show the existence of solution to MP (\ref{mp1},\,\ref{mp2}) by the  convergence of the branching particle systems. In Section \ref{uniqueness}, the equivalence between MP  (\ref{mp1},\,\ref{mp2}) and SPDEs satisfied by the distribution-function-valued processes is established and further we prove the pathwise uniqueness of the SPDEs by an extended Yamada-Watanabe argument. Throughout the paper, we let the letter K with or without subscripts
to denote constants whose exact value is unimportant and may change from line to line.

\section{ A related branching model with migration}\label{se:model}
There exists a population living in two colonies with labels $\{\mathbbm{1,\,2}\}$. Initially, each colony has $n$ particles, spatially distributed in $\mathbbm{R}$. Write $k\sim t$ to mean that the $k$th living particle at time $t$ in each colony. For any $s\geq t$, denote by $X_{k\sim t}^{n,\mathbbm{i}}(s)$
the corresponding particle's location
at time $s$
in colony $\mathbbm{i}$ with  $\mathbbm{i}\in\{\mathbbm{1,\,2}\}$ if it is alive up to time $s$.
The motions of the particles during their life times are modeled by independent Brownian motions.
For any $\mathbbm{i}\in\{\mathbbm{1,2}\}$, $k\sim t$, accompanying the corresponding particle with standard Brownian motion $\{B_{k\sim t}^{\mathbbm{i}}\(s\):s\geq 0\}$, we have
\begin{equation*}
X_{k\sim t}^{n,\mathbbm{i}}\(s\)=
X_{k\sim t}^{n,\mathbbm{i}}\(t\)+B_{k\sim t}^{\mathbbm{i}}\(s\)-B_{k\sim t}^{\mathbbm{i}}\(t\), \qquad \forall\ s\geq t.
\end{equation*}
If $s<t$, the same notation $X_{k\sim t}^{n,\mathbbm{i}}(s)$ represents the location of the corresponding particle's mother at time $s$.
Denote by $\lfloor x\rfloor$ the integer part of $x$. Let $h$ be a positive constant which is small enough.
For any exponential random variable $Y$ with parameter $\lambda$,
we have
\begin{equation*}
\lim_{\Delta\rightarrow 0+}h^{-1}\mathbbm{P}\(t\leq Y< t+\Delta\)=\lambda.
\end{equation*}

Now we are ready to introduce the branching particle systems. In colony $\mathbbm{1}$, there exist independent branching and emigration, and there are also independent branching and immigration in colony $\mathbbm{2}$. The branching mechanisms in two colonies are also independent. However, the emigration and immigration are interactive. 
The particles in colony $\mathbbm{1}$ can move to colony $\mathbbm{2}$ and the opposite direction is not allowed. During branching/emigration/immigration events, all the particles move according to independent Brownian motions.
\begin{itemize}
\item (Measure-valued process in colony $\mathbbm{i}$ with $\mathbbm{i} = \mathbbm{1}, \mathbbm{2}$) Let $\mu_t^{n,\mathbbm{i}}$ be the empirical distribution of particles living in colony $\mathbbm{i}$, i.e., for any $f\in C_{b}^{2}\(\mathbbm{R}\)$, we have
    $$\left<\mu_t^{n,\mathbbm{i}},f\right>=\frac{1}{n}\sum_{k\sim t}f\(X_{k\sim t}^{n,\mathbbm{i}}\(t\)\),$$
    where the sum $k\sim t$ includes all those particles  alive at $t$ in each colony.
\item (Branching in colony $\mathbbm{i}$ with $\mathbbm{i} = \mathbbm{1}, \mathbbm{2}$) Let $\lambda_{n,\mathbbm{i}}$ be the branching rate of particles in colony $\mathbbm{i}$ with $\frac{\lambda_{n,\mathbbm{i}}}{n}\rightarrow\lambda_{\mathbbm{i}}$ as $n\rightarrow\infty$.  At the time of a particle's death, it gives birth to a random number $\xi^{n, \mathbbm{i}}$ of offspring with $\mathbbm{E}\xi^{n, \mathbbm{i}}=1+\frac{\beta_{n,\mathbbm{i}}}{n}$, $\mathbbm{Var}\xi^{n, \mathbbm{i}}=\sigma^2_{n,\mathbbm{i}}$ satisfying $\beta_{n,\mathbbm{i}}\rightarrow\beta_{\mathbbm{i}}$ and $\sigma_{n,\mathbbm{i}}\rightarrow\sigma_{\mathbbm{i}}$ as $n\rightarrow\infty$. The offspring start moving from their mothers' locations.
\item (Emigration in colony $\mathbbm{1}$) For a particle alive at time $t$ and position $x$ in colony $\mathbbm{1}$, the conditional probability of emigration in the time interval $[t,t+h)$ is
\beqnn
1-\exp\left\{-\int_{t}^{t+h}\eta\(x, \mu_s^{n,\mathbbm{1}}, \mu_s^{n,\mathbbm{2}}\)ds\right\}\approx h\eta\(x, \mu_t^{n,\mathbbm{1}},\mu_t^{n,\mathbbm{2}}\),
\eeqnn
where $h$ is small enough and $\eta\(x,  \mu_s^{n,\mathbbm{1}},\mu_s^{n,\mathbbm{2}}\)\in C_b^+(\mathbbm{R}\times M_F(\mathbbm{R})^2)$ is the emigration intensity. 
\item (Immigration in colony $\mathbbm{2}$) The emigration at time $t$ in colony $\mathbbm{1}$ induces the immigration in colony $\mathbbm{2}$, with total immigrants $\langle \mu_t^{n,\mathbbm{1}}, \eta(\cdot,\mu_t^{n,\mathbbm{1}},\mu_t^{n,\mathbbm{2}})\rangle$. Therefore, the probability of immigration to colony $\mathbbm{2}$ in the time interval $[t, t + h)$ is
    \beqnn
    &\ &1-\exp\left\{-\int_{t}^{t+h}\langle \mu_s^{n,\mathbbm{1}}, \eta\(\cdot, \mu_s^{n,\mathbbm{1}},\mu_s^{n,\mathbbm{2}}\)\rangle d s\right\}\approx \langle \mu_t^{n,\mathbbm{1}}, \eta(\cdot, \mu_t^{n,\mathbbm{1}},\mu_t^{n,\mathbbm{2}})\rangle h.
    \eeqnn
    Moreover, we assume that the immigrants settle down in colony $\mathbbm{2}$ according to a finite measure $\chi$ on $\mathbbm{R}.$

\end{itemize}


\section{Existence of solution to the martingale problem}\label{existence}
In this section,  we study the convergence of a sequence of measure-valued processes arising as the empirical measures of the proposed branching particle systems in previous section, where the limit is a weak solution to MP (\ref{mp1},\,\ref{mp2}).

Given any $t>0$  and $\mathbbm{i}\in\{\mathbbm{1,2}\}$, denote by $\tau_{k\sim t}^{\mathbbm{i}}$ the time of the first reproduction event in colony $\mathbbm{i}$
after $t$ induced by the $k$th living particle at $t$ in
colony $\mathbbm{i}$.
Let $\rho_{k\sim t}$ be the time of first migration after $t$ from colony $\mathbbm{1}$ to colony $\mathbbm{2}$ caused by the motion of the $k$th living particle at $t$ in colony $\mathbbm{1}$. We denote that $\sum_{i = 1}^0 f_i = 0$ for any $f_i$.  The total number of migration and reproduction during $[t,t+h)$ is at most once a.s. if $h$ is small enough. Without abuse of notation, these events $\{\{\rho_{k\sim t}< t+h\},\{\tau_{k\sim t}^{\mathbbm{1}}< t+h\},\,k=1,2,\ldots\}$ are incompatible. It follows from the construction of the branching particle systems that
\beqnn
\left<\mu_{t+h}^{n,\mathbbm{1}},f\right>
\ar=\ar\frac{1}{n}\sum_{k\sim t}\(f\(X_{k\sim t}^{n,\mathbbm{1}}\(t+h\)\) +\Delta_{k\sim t}^{n,\mathbbm{1}}\(f\)-D_{k\sim t}^{n,\mathbbm{1}}\(f\)\)
\eeqnn
with
\begin{equation}\label{eq:delta1}
\Delta_{k\sim t}^{n, \mathbbm{1}}(f) =\Big[\sum_{i=1}^{\xi^{n, \mathbbm{1}}_{k\sim t}}f\(X_{k\sim t}^{n,\mathbbm{1},i}\(t+h\)\) - f\(X_{k\sim t}^{n,\mathbbm{1}}\(t+h\)\)\Big]\ind_{\{\tau_{k\sim t}^{\mathbbm{1}}< t+h\}}
\end{equation}
and
\beqnn
D_{k\sim t}^{n, \mathbbm{1}}(f) = f\(X_{k\sim t}^{n,\mathbbm{1}}\(t+h\)\)\ind_{\{\rho_{k\sim t}< t+h\}},
\eeqnn
where $\xi^{n, \mathbbm{1}}_{k\sim t}$, $k=1,2,\ldots$ are i.i.d. copies of $\xi^{n, \mathbbm{1}}$; $X_{k\sim t}^{n,\mathbbm{1},i}\(t+h\)$ has the same distribution
as $X_{k\sim t}^{n,\mathbbm{1}}\(t+h\)$; $\ind_{\{\cdot\}}$ is the indication function.
Applying It\^{o}'s formula, we have
\beqnn
f\(X_{k\sim t}^{n,\mathbbm{1}}\(t+h\)\)\ar=\ar f\(X_{k\sim t}^{n,\mathbbm{1}}\(t\)\)+\int_{t}^{t+h}f'\(X_{k\sim t}^{n,\mathbbm{1}}\(s\)\)dB^{\mathbbm{1}}_{k\sim t}\(s\)\ccr
\ar\ar +\frac12\int_{t}^{t+h}f''\(X_{k\sim t}^{n,\mathbbm{1}}\(s\)\)ds.
\eeqnn
Consequently,
\beqnn
\left<\mu_{t+h}^{n,\mathbbm{1}},f\right>
\ar=\ar\left<\mu_{t}^{n,\mathbbm{1}},f\right>
+\frac{1}{n}\sum_{k\sim t}\int_{t}^{t+h}f'\(X_{k\sim t}^{n,\mathbbm{1}}\(s\)\)dB^{\mathbbm{1}}_{k\sim t}\(s\)\cr
\ar\ar+\frac{1}{n}\sum_{k\sim t}\frac12\int_{t}^{t+h}f''\(X_{k\sim t}^{n,\mathbbm{1}}\(s\)\)ds
+\frac{1}{n}\sum_{k\sim t}\(\Delta_{k\sim t}^{n,\mathbbm{1}}\(f\)-D_{k\sim t}^{n,\mathbbm{1}}\(f\)\).
\eeqnn
Now we discretize the time interval according to step size $h$. Assume that the random events (including  branching, emigration and immigration events) only happen at the endpoints of each small subinterval.
Then, for any $jh\leq t<\(j+1\)h$ with $j=0,1,2,\ldots$, we have
\beqnn
\left<\mu_{t}^{n,\mathbbm{1}},f\right> \ar=\ar\left<\mu_{0}^{n,\mathbbm{1}},f\right>+\left<\mu_{t}^{n,\mathbbm{1}},f\right>-\left<\mu_{jh}^{n,\mathbbm{1}},f\right>+\sum_{\ell=0}^{j-1}\(\left<\mu_{\(\ell+1\)h}^{n,\mathbbm{1}},f\right>-\left<\mu_{\ell h}^{n,\mathbbm{1}},f\right>\)\cr
\ar=\ar\left<\mu_{0}^{n,\mathbbm{1}},f\right> +\sum_{\ell=0}^{j}\frac{1}{n}\sum_{k\sim \ell h}\int_{\ell h}^{ (\ell+1)h\wedge t}f'\(X_{k\sim \ell h}^{n,\mathbbm{1}}\(s\)\)dB_{k\sim \ell h}^{\mathbbm{1}}\(s\)\cr
\ar\ar+\sum_{\ell=0}^{j-1}\frac{1}{n}\sum_{k\sim \ell h}\(\Delta_{k\sim\ell h}^{n, \mathbbm{1}}(f)-D_{k\sim\ell h}^{n, \mathbbm{1}}(f)\)
+\sum_{\ell=0}^{j}\frac{1}{n}\sum_{k\sim \ell h}\frac12\int_{\ell h}^{(\ell+1)h\wedge t}f''\(X_{k\sim \ell h}^{n,\mathbbm{1}}\(s\)\)ds.
\eeqnn
Let $M_{1}^{n,f}, M_{2}^{n,f}$ and $M_{3}^{n,f}$ be martingales with
\beqnn
M_{1}^{n,f}\(t\)=\sum_{\ell=0}^{j}\frac{1}{n}\sum_{k\sim \ell h}\int_{\ell h}^{\(\ell+1\)h\wedge t}f'\(X_{k\sim \ell h}^{n,\mathbbm{1}}\(s\)\)dB^{\mathbbm{1}}_{k\sim \ell h}\(s\),
\eeqnn
\beqlb\label{M_2_1}
 M_{2}^{n,f}(t) \ar=\ar\sum_{\ell=0}^{j-1}\frac{1}{n}\sum_{k\sim \ell h}\Big\{\Delta_{k\sim\ell h}^{n, \mathbbm{1}}\(f\) - \mathbbm{E}\big[\Delta_{k\sim\ell h}^{n, \mathbbm{1}}\(f\)|\mathcal{F}_{\ell h}\big]\Big\}
\eeqlb
and
\beqlb\label{M_3_1}
 M_{3}^{n,f}(t) \ar=\ar\sum_{\ell=0}^{j-1}\frac{1}{n}\sum_{k\sim \ell h}\Big\{D_{k\sim\ell h}^{n, \mathbbm{1}}\(f\) - \mathbbm{E}\big[D_{k\sim\ell h}^{n, \mathbbm{1}}\(f\)|\mathcal{F}_{\ell h}\big]\Big\}.
\eeqlb
Moreover, $A^{n, f}$ is defined as
\beqlb\label{{A}}
A^{n,f}\(t\)\ar=\ar\sum_{\ell=0}^{\lfloor t/h\rfloor}\frac{1}{n}\sum_{k\sim \ell h}\frac12\int_{\ell h}^{\(\ell+1\)h\wedge t}f''\(X_{k\sim \ell h}^{n,\mathbbm{1}}\(s\)\)ds\cr
\ar\ar+\sum_{\ell=0}^{\lfloor t/h\rfloor-1}\frac{1}{n}\sum_{k\sim \ell h}\mathbbm{E}\Big[\Delta_{k\sim\ell h}^{n, \mathbbm{1}}\(f\)-D_{k\sim\ell h}^{n, \mathbbm{1}}\(f\)|\mathcal{F}_{\ell h}\Big]\cr
\ar=\ar\sum_{\ell=0}^{\lfloor t/h\rfloor}\frac12\int_{\ell h}^{\(\ell+1\)h\wedge t}\left<\mu_s^{n,\mathbbm{1}},f''\right>ds+\sum_{\ell=0}^{\lfloor t/h\rfloor-1}\frac{\be_{n,\mathbbm{1}}}{n}\lambda_{n,\mathbbm{1}}\left<\mu_{\ell h}^{n,\mathbbm{1}},f\right>h\cr
\ar\ar-\sum_{\ell=0}^{\lfloor t/h\rfloor-1}\left<\mu_{\ell h}^{n,\mathbbm{1}},\eta\(\cdot,\mu_{\ell h}^{n,\mathbbm{1}},\mu_{\ell h}^{n,\mathbbm{2}}\)f\right>h+O\(h\).
\eeqlb
One can see that
\beqlb\label{mu_1}
\langle \mu_t^{n,\mathbbm{1}}, f\rangle = \left<\mu_{0}^{n,\mathbbm{1}},f\right>+M_{1}^{n,f}(t)+M_{2}^{n,f}(t)-M_{3}^{n,f}(t)+A^{n,f}(t).
\eeqlb
Carrying out similar steps as above in colony $\mathbbm{2}$, we get
\begin{equation}\label{mu_2}
\begin{split}
&\left<\mu_{t}^{n,\mathbbm{2}},g\right>=\left<\mu_{0}^{n,\mathbbm{2}},g\right>
+\hat{M}_{1}^{n,g}\(t\)+\hat{M}_{2}^{n,g}\(t\)+\hat{M}_{3}^{n,g}\(t\)+\hat{A}^{n,g}\(t\),
\end{split}
\end{equation}
where
\beqnn
\hat{M}_{1}^{n,g}\(t\)=\sum_{\ell=0}^{\lfloor t/h\rfloor}\frac{1}{n}\sum_{k\sim \ell h}\int_{\ell h}^{\(\ell+1\)h\wedge t}g'\(X_{k\sim \ell h}^{n,\mathbbm{2}}\(s\)\)dB^{\mathbbm{2}}_{k\sim \ell h}\(s\)
\eeqnn
and
\beqlb\label{hat{M}_2_1}
\hat{M}_{2}^{n,g}\(t\) = \sum_{\ell=0}^{\lfloor t/h\rfloor-1}\frac{1}{n}\sum_{k\sim \ell h}\Big\{\Delta_{k\sim \ell h}^{n, \mathbbm{2}}\(g\)
-\mathbbm{E}\left[\Delta_{k\sim\ell h}^{n, \mathbbm{2}}\(g\)|\mathcal{F}_{\ell h}\right]\Big\}
\eeqlb
are martingales with $\Delta_{k\sim\ell h}^{n, \mathbbm{2}}\(g\)$ defined similar as (\ref{eq:delta1})
and $X_{k\sim t}^{n,\mathbbm{2},i}\(t+h\)$ having the same distribution
as $X_{k\sim t}^{n,\mathbbm{2}}\(t+h\)$. Moreover,
\begin{equation}\label{hat{M}_3_1}
\hat{M}_{3}^{n,g}\(t\)= \sum_{\ell=0}^{\lfloor t/h\rfloor-1}\frac{1}{n}\sum_{k\sim \ell h}\Big\{D_{k\sim \ell h}^{n, \mathbbm{2}}\(g\)
-\mathbbm{E}\left[D_{k\sim\ell h}^{n, \mathbbm{2}}\(g\)|\mathcal{F}_{\ell h}\right]\Big\}
\end{equation}
with
\beqnn
D_{k\sim\ell h}^{n, \mathbbm{2}}\(g\) = \langle\chi,g\rangle\ind_{\{ \rho_{k\sim\ell h}\leq\(\ell+1\)h\}}
\eeqnn
is also a martingale and
\beqlb\label{hat{A}}
\hat{A}^{n,g}\(t\)\ar=\ar\sum_{\ell=0}^{\lfloor t/h\rfloor}\frac{1}{n}\sum_{k\sim \ell h}\frac12\int_{\ell h}^{\(\ell+1\)h\wedge t}g''\(X_{k\sim \ell h}^{n,\mathbbm{2}}\(s\)\)ds\cr
 \ar\ar+ \sum_{\ell=0}^{\lfloor t/h\rfloor-1}\frac{1}{n}\sum_{k\sim \ell h}\mathbbm{E}\left[\Delta_{k\sim \ell h}^{n, \mathbbm{2}}\(g\)-D_{k\sim \ell h}^{n, \mathbbm{2}}\(g\)\Big|\mathcal{F}_{\ell h}\right]\cr
\ar=\ar\sum_{\ell=0}^{\lfloor t/h\rfloor}\frac12\int_{\ell h}^{\(\ell+1\)h\wedge t}\left<\mu_s^{n,\mathbbm{2}},g''\right>ds+\sum_{\ell=0}^{\lfloor t/h\rfloor-1}\frac{\be_{n,{\mathbbm{2}}}}{n}\lambda_{n,{\mathbbm{2}}}\left<\mu_{\ell h}^{n,\mathbbm{2}},g\right>h\cr
\ar\ar+\sum_{\ell=0}^{\lfloor t/h\rfloor-1}\langle\chi, g\rangle\langle \mu_{\ell h}^{n,\mathbbm{1}}, \eta(\cdot,\mu_{\ell h}^{n,\mathbbm{1}},\mu_{\ell h}^{n,\mathbbm{2}})\rangle h+O\(h\).
\eeqlb

From the construction of our model, one can check that $M_{1}^{n,f},M_{2}^{n,f}$ and $M_{3}^{n,f}$ (resp. $\hat{M}_{1}^{n,g},\hat{M}_{2}^{n,g}$ and $\hat{M}_{3}^{n,g}$) are three mutually uncorrelated martingales. The interaction may only exist between $M_{3}^{n,f}$ and $\hat{M}_{3}^{n,g}$. Subsequently, we calculate the quadratic variation and covariation processes for these martingales.
It follows from It\^{o}'s integral that
\begin{equation}\label{M_1}
\begin{split}
\left<M_{1}^{n,f}\right>_{t}&=\sum_{\ell=0}^{\lfloor t/h\rfloor}\frac{1}{n^2}\sum_{k\sim \ell h}\int_{\ell h}^{\(\ell+1\)h\wedge t}\left|f'\(X_{k\sim \ell h}^{n,\mathbbm{1}}\(s\)\)\right|^{2}ds\\
&=\frac{1}{n}\int_{0}^{t}\left<\mu_{s}^{n,\mathbbm{1}},\left|f'\right|^{2}\right>ds
{\rightarrow 0,}\qquad \text{~as~ $n\rightarrow\infty$}.
\end{split}
\end{equation}
Applying Lemma 8.12 in \cite{filter}, we obtain the quadratic variations of $M_{2}^{n,f}$ and $M_{3}^{n,f}$ as follows:
{\small\beqlb\label{M_2}
\left<M_{2}^{n,f}\right>_{t}
\ar=\ar\sum_{\ell=0}^{\lfloor t/h\rfloor-1}\frac{1}{n^2}\mathbbm{E}\left\{\Big[\sum_{k\sim \ell h}\big(\Delta_{k\sim\ell h}^{n, \mathbbm{1}}\(f\)-\mathbbm{E}(\Delta_{k\sim\ell h}^{n, \mathbbm{1}}\(f\)|\mathcal{F}_{\ell h})\big)\Big]^2\Big|\mathcal{F}_{\ell h}\right\}\cr
\ar=\ar\sum_{\ell=0}^{\lfloor t/h\rfloor-1}\frac{1}{n^2}\sum_{k\sim \ell h}\left[f^2\(X_{k\sim \ell h}^{n,\mathbbm{1}}\(\ell h\)\)\sigma^2_{n,\mathbbm{1}}\lambda_{n,\mathbbm{1}}h+O\(h^2\)\right]\cr
\ar=\ar\frac{1}{n}\left(\sum_{\ell=0}^{\lfloor t/h\rfloor-1}\left<\mu_{\ell h}^{n,\mathbbm{1}},f^2\right>\sigma^2_{n,{\mathbbm{1}}}\lambda_{n,{\mathbbm{1}}}h+O\(h\)\right)
\eeqlb}
and
\begin{equation}\label{M_3}
\begin{split}
\left<M_{3}^{n,f}\right>_{t}
=&\,\,\sum_{\ell=0}^{\lfloor t/h\rfloor-1}\frac{1}{n^2}\mathbbm{E}\left\{\Big[\sum_{k\sim \ell h}\big(D_{k\sim\ell h}^{n, \mathbbm{1}}\(f\)-\mathbbm{E}(D_{k\sim\ell h}^{n, \mathbbm{1}}\(f\)|\mathcal{F}_{\ell h})\big)\Big]^2\Big|\mathcal{F}_{\ell h}\right\}\\
=&\,\,\sum_{\ell=0}^{\lfloor t/h\rfloor-1}\frac{1}{n^2}\sum_{k\sim \ell h}\left[f^2(X_{k\sim \ell h}^{n,\mathbbm{1}}\(\ell h\))\eta\big(X_{k\sim \ell h}^{n,\mathbbm{1}}\(\ell h\),\mu_{\ell h}^{n,\mathbbm{1}},\mu_{\ell h}^{n,\mathbbm{2}}\big)h+O\(h^2\)\right]\\
=&\,\,\frac{1}{n}\Bigg(\sum_{\ell=0}^{\lfloor t/h\rfloor-1}\left<\mu_{\ell h}^{n,\mathbbm{1}},f^2\(\cdot\)\eta\(\cdot,\mu_{\ell h}^{n,\mathbbm{1}},\mu_{\ell h}^{n,\mathbbm{2}}\)\right>h+O\(h\)\Bigg).
\end{split}
\end{equation}
The quadratic variations of $\hat{M}_{1}^{n,g},\,\hat{M}_{2}^{n,g}$ and $\hat{M}_{3}^{n,g}$ can be similarly derived as follows:
\begin{equation}\label{hat{M}_1}
\begin{split}
\left<\hat{M}_{1}^{n,g}\right>_{t}&=\sum_{\ell=0}^{\lfloor t/h\rfloor}\frac{1}{n^2}\sum_{k\sim \ell h}\int_{\ell h}^{\(\ell+1\)h\wedge t}\Big|g'\Big(X_{k\sim \ell h}^{n,\mathbbm{2}}(s)\Big)\Big|^{2}ds\\
&=\frac{1}{n}\int_{0}^{t}\langle\mu_{s}^{n,\mathbbm{2}},|g'|^{2}\rangle ds{\rightarrow 0,}\qquad\text{~as~ $n\rightarrow\infty$},\\
\end{split}
\end{equation}
\begin{equation}\label{hat{M}_2}
\begin{split}
\left<\hat{M}_{2}^{n,g}\right>_{t}
=&\,\,\sum_{\ell=0}^{\lfloor t/h\rfloor-1}\frac{1}{n^2}\sum_{k\sim \ell h}\left(g^2\(X_{k\sim \ell h}^{n,\mathbbm{2}}\(\ell h\)\)\sigma^2_{n,{\mathbbm{2}}}\lambda_{n,\mathbbm{2}}h+O\(h^2\)\right)\\
=&\,\,\frac{1}{n}\left(\sum_{\ell=0}^{\lfloor t/h\rfloor-1}\left<\mu_{\ell h}^{n,\mathbbm{2}},g^2\right>\sigma^2_{n,{\mathbbm{2}}}\lambda_{n,\mathbbm{2}}h+O\(h\)\right)
\end{split}
\end{equation}
and
\begin{equation}\label{hat{M}_3}
\begin{split}
\left<\hat{M}_{3}^{n,g}\right>_{t}
=&\,\, \frac{1}{n}\sum_{\ell=0}^{\lfloor t/h\rfloor-1}\langle\chi, g\rangle^2\big\langle \mu_{\ell h}^{n,\mathbbm{1}}, \eta(\cdot,\mu_{\ell h}^{n,\mathbbm{1}},\mu_{\ell h}^{n,\mathbbm{2}})\big\rangle h+O\(h\).
\end{split}
\end{equation}

\begin{proposition}
The covariation process of ${M}_{3}^{n,f}$ and $\hat{M}_{3}^{n,g}$ is
\begin{equation*}
\begin{split}
\langle{M}_{3}^{n,f},\hat{M}_{3}^{n,g}\rangle_t
=&\sum_{\ell=0}^{\lfloor t/h\rfloor-1}\frac{1}{n}\langle \chi, g\rangle\big\langle\mu^{n,\mathbbm{1}}_{\ell h},f\(\cdot\)\eta(\cdot,\mu_{\ell h}^{n,\mathbbm{1}},\mu^{n,\mathbbm{2}}_{\ell h})\big\rangle h\\
&\quad-\sum_{\ell=0}^{\lfloor t/h\rfloor-1}\langle \chi, g\rangle\langle\mu^{n,\mathbbm{1}}_{\ell h},f\(\cdot\)\eta(\cdot,\mu^{n,\mathbbm{1}}_{\ell h},\mu^{n,\mathbbm{2}}_{\ell h})\rangle \big\langle\mu^{n,\mathbbm{1}}_{\ell h},\eta(\cdot,\mu^{n,\mathbbm{1}}_{\ell h},\mu^{n,\mathbbm{2}}_{\ell h})\big\rangle h^2
+ \frac{O\(h\)}{n}.
\end{split}
\end{equation*}
\end{proposition}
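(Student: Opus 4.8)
The plan is to treat $M_{3}^{n,f}$ and $\hat{M}_{3}^{n,g}$ as discrete-time martingales indexed by the grid $\{\ell h\}$ and to compute their predictable covariation as the sum over $\ell$ of the conditional covariances of their one-step increments, exactly as Lemma 8.12 in \cite{filter} was used for the quadratic variations \eqref{M_3} and \eqref{hat{M}_3}. First I would record the increments over $[\ell h,(\ell+1)h)$, namely $\frac1n\sum_{k\sim\ell h}\big(D^{n,\mathbbm{1}}_{k\sim\ell h}(f)-\mathbbm{E}[D^{n,\mathbbm{1}}_{k\sim\ell h}(f)|\mathcal{F}_{\ell h}]\big)$ for $M_{3}^{n,f}$ and $\frac1n\sum_{k\sim\ell h}\big(D^{n,\mathbbm{2}}_{k\sim\ell h}(g)-\mathbbm{E}[D^{n,\mathbbm{2}}_{k\sim\ell h}(g)|\mathcal{F}_{\ell h}]\big)$ for $\hat{M}_{3}^{n,g}$, noting that both sums run over the same colony-$\mathbbm{1}$ particles and are driven by the same migration clocks $\rho_{k\sim\ell h}$. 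This reduces the covariation to $\sum_{\ell}\frac{1}{n^{2}}\,\mathrm{Cov}\big(S_{\mathbbm{1}},S_{\mathbbm{2}}\mid\mathcal{F}_{\ell h}\big)$, where $S_{\mathbbm{1}}=\sum_{k\sim\ell h}D^{n,\mathbbm{1}}_{k\sim\ell h}(f)$ and $S_{\mathbbm{2}}=\sum_{k\sim\ell h}D^{n,\mathbbm{2}}_{k\sim\ell h}(g)$ aggregate the emigration and induced-immigration contributions over the step.

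The crucial observation, and the origin of the two structurally different summands, is the asymmetry between $\mathbbm{E}[S_{\mathbbm{1}}S_{\mathbbm{2}}|\mathcal{F}_{\ell h}]$ and $\mathbbm{E}[S_{\mathbbm{1}}|\mathcal{F}_{\ell h}]\,\mathbbm{E}[S_{\mathbbm{2}}|\mathcal{F}_{\ell h}]$. In the product $S_{\mathbbm{1}}S_{\mathbbm{2}}=\sum_{k,j}D^{n,\mathbbm{1}}_{k\sim\ell h}(f)D^{n,\mathbbm{2}}_{j\sim\ell h}(g)$ every off-diagonal term $k\neq j$ would require particles $k$ and $j$ both to migrate within the single step, which by the incompatibility of the migration events noted after the discretization (at most one migration or reproduction per step, a.s., for $h$ small) vanishes identically; and since $\{\rho_{k\sim\ell h}<(\ell+1)h\}\subseteq\{\rho_{k\sim\ell h}\leq(\ell+1)h\}$, the diagonal term obeys $D^{n,\mathbbm{1}}_{k\sim\ell h}(f)D^{n,\mathbbm{2}}_{k\sim\ell h}(g)=\langle\chi,g\rangle\,D^{n,\mathbbm{1}}_{k\sim\ell h}(f)$. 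Hence $\mathbbm{E}[S_{\mathbbm{1}}S_{\mathbbm{2}}|\mathcal{F}_{\ell h}]=\langle\chi,g\rangle\,\mathbbm{E}[S_{\mathbbm{1}}|\mathcal{F}_{\ell h}]$ exactly. By contrast, the product of compensators $\mathbbm{E}[S_{\mathbbm{1}}|\mathcal{F}_{\ell h}]\,\mathbbm{E}[S_{\mathbbm{2}}|\mathcal{F}_{\ell h}]$ does not see the mutual exclusion and retains the full double sum over colony-$\mathbbm{1}$ particles.

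It then remains to evaluate the two pieces to the stated order. Using the per-particle emigration probability $\mathbbm{E}[\ind_{\{\rho_{k\sim\ell h}<(\ell+1)h\}}|\mathcal{F}_{\ell h}]=\eta(X^{n,\mathbbm{1}}_{k\sim\ell h}(\ell h),\mu^{n,\mathbbm{1}}_{\ell h},\mu^{n,\mathbbm{2}}_{\ell h})h+O(h^{2})$ together with $f(X^{n,\mathbbm{1}}_{k\sim\ell h}((\ell+1)h))=f(X^{n,\mathbbm{1}}_{k\sim\ell h}(\ell h))+O(\sqrt{h})$ (of conditional mean $O(h)$), I would obtain $\mathbbm{E}[S_{\mathbbm{1}}|\mathcal{F}_{\ell h}]=n\langle\mu^{n,\mathbbm{1}}_{\ell h},f\eta\rangle h+O(nh^{2})$ and $\mathbbm{E}[S_{\mathbbm{2}}|\mathcal{F}_{\ell h}]=\langle\chi,g\rangle n\langle\mu^{n,\mathbbm{1}}_{\ell h},\eta\rangle h+O(nh^{2})$, writing $\eta=\eta(\cdot,\mu^{n,\mathbbm{1}}_{\ell h},\mu^{n,\mathbbm{2}}_{\ell h})$. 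Substituting into $\frac{1}{n^{2}}\big(\langle\chi,g\rangle\mathbbm{E}[S_{\mathbbm{1}}|\mathcal{F}_{\ell h}]-\mathbbm{E}[S_{\mathbbm{1}}|\mathcal{F}_{\ell h}]\mathbbm{E}[S_{\mathbbm{2}}|\mathcal{F}_{\ell h}]\big)$ produces the diagonal summand $\frac1n\langle\chi,g\rangle\langle\mu^{n,\mathbbm{1}}_{\ell h},f\eta\rangle h$ and the compensator-product summand $-\langle\chi,g\rangle\langle\mu^{n,\mathbbm{1}}_{\ell h},f\eta\rangle\langle\mu^{n,\mathbbm{1}}_{\ell h},\eta\rangle h^{2}$; summing over $\ell$ yields the assertion, the $O(h^{2})$-per-step remainder of the diagonal term aggregating to $O(h)/n$. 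The main obstacle is conceptual rather than computational: one must exploit the at-most-one-migration property to discard the off-diagonal terms in $\mathbbm{E}[S_{\mathbbm{1}}S_{\mathbbm{2}}|\mathcal{F}_{\ell h}]$ while keeping them in the product of compensators, which is precisely what separates the $O(h)$ diagonal term (carrying the extra factor $1/n$) from the $O(h^{2})$ product term; the remaining error bookkeeping, using the boundedness of $\eta,f,g$ and finiteness of $\chi$, is then routine.
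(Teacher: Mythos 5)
Your proposal is correct and follows essentially the same route as the paper: both reduce the predictable covariation to the sum over $\ell$ of conditional covariances of the one-step increments, split the double sum over particles into diagonal and off-diagonal parts, use the identity $D^{n,\mathbbm{1}}_{k\sim\ell h}(f)D^{n,\mathbbm{2}}_{k\sim\ell h}(g)=\langle\chi,g\rangle D^{n,\mathbbm{1}}_{k\sim\ell h}(f)$ together with the at-most-one-migration-per-step property to kill the off-diagonal products, and recover the $O(h^2)$ term from the product of compensators. The only difference is organizational (you aggregate into $S_{\mathbbm{1}},S_{\mathbbm{2}}$ before taking covariances, while the paper works pairwise), which changes nothing in substance.
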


\begin{proof}
This result shows the correlation between the processes $(\mu_t^{n, \mathbbm{1}})_{t \ge 0}$ and $(\mu_t^{n, \mathbbm{2}})_{t \ge 0}.$
For simplicity of notation, we set
\beqnn
I_1\(\ell,k\)=\frac{1}{n}\Big\{D_{k\sim\ell h}^{n, \mathbbm{1}}\(f\) - \mathbbm{E}\big[D_{k\sim\ell h}^{n, \mathbbm{1}}\(f\)|\mathcal{F}_{\ell h}\big]\Big\}
\eeqnn
 and
 \beqnn
 I_2\(\ell,k\)=\,\frac{1}{n}\Big\{D_{k\sim\ell h}^{n, \mathbbm{2}}\(g\) - \mathbbm{E}\big[D_{k\sim\ell h}^{n, \mathbbm{2}}\(g\)|\mathcal{F}_{\ell h}\big]\Big\}.
 \eeqnn
It follows from  Equations (\ref{M_3_1}) and (\ref{hat{M}_3_1}) that
$${M}_{3}^{n,f}\(t\)\,=\,\sum_{\ell=0}^{\lfloor t/h\rfloor-1}I_1\(\ell\)\,=\,\sum_{\ell=0}^{\lfloor t/h\rfloor-1}\sum_{k\sim \ell h}I_1\(\ell,k\)$$
and
$$\hat{M}_{3}^{n,g}\(t\)\,=\,\sum_{\ell=0}^{\lfloor t/h\rfloor-1}I_2\(\ell\)\,=\,\sum_{\ell=0}^{\lfloor t/h\rfloor-1}\sum_{k\sim \ell h}I_2\(\ell,k\).$$
Since
$\mathbbm{E}I_1\(\ell\)=\mathbbm{E}I_2\(\ell\)=0$, we have
\beqnn
\<{M}_{3}^{n,f},\hat{M}_{3}^{n,g}\>_t
\ar=\ar\sum_{\ell=0}^{\lfloor t/h\rfloor-1}\mathbbm{E}\[I_1\(\ell\)I_2\(\ell\)|\mathcal{F}_{\ell h}\]\cr
\ar=\ar\sum_{\ell=0}^{\lfloor t/h\rfloor-1}\mathbbm{E}\Big[\sum_{k\sim\ell h}I_1\(\ell,k\)I_2\(\ell,k\)\Big|\mathcal{F}_{\ell h}\Big]\cr
\ar\ar +\sum_{\ell=0}^{\lfloor t/h\rfloor-1}\mathbbm{E}\Big[\sum_{\substack{k_1\sim \ell h, k_2\sim \ell h\\k_1\neq k_2}}I_1\(\ell,k_1\)I_2\(\ell,k_2\)\Big|\mathcal{F}_{\ell h}\Big].
\eeqnn
Note that
\beqnn
\ar\ar\mathbbm{E}\[I_1\(\ell,k\)I_2\(\ell,k\)\Big|\mathcal{F}_{\ell h}\]\cr
\ar\ar\qquad=\frac{\< \chi, g\>}{n^2} \left\{\mathbbm{E}\[D_{k\sim\ell h}^{n, \mathbbm{1}}\(f\)\big|\mathcal{F}_{\ell h}\]-
\mathbbm{E}\[D_{k\sim\ell h}^{n, \mathbbm{1}}\(f\)\big|\mathcal{F}_{\ell h}\]\cdot\mathbbm{E}\[{\text{\rm$\ind$}}_{\{ \rho_{k\sim\ell h}\leq(\ell+1)h\}}\big|\mathcal{F}_{\ell h}\]\right\}\cr
\ar\ar\qquad=\frac{\< \chi, g\>}{n^2} f\(X_{k\sim \ell h}^{n,\mathbbm{1}}(\ell h)\)\eta\(X_{k\sim \ell h}^{n,\mathbbm{1}}\(\ell h\),\mu^{n,\mathbbm{1}}_{\ell h},\mu^{n,\mathbbm{2}}_{\ell h}\)h\cr
\ar\ar\qquad\qquad\qquad\times \Big[1 - \eta\(X_{k\sim \ell h}^{n,\mathbbm{1}}(\ell h),\mu^{n,\mathbbm{1}}_{\ell h},\mu^{n,\mathbbm{2}}_{\ell h}\)h\Big]
+ \frac{O(h^2)}{n^2}.
\eeqnn
Similarly, for any $k_1\neq k_2$ one can see that
\beqnn
\ar\ar\mathbbm{E}\big[I_1\(\ell,k_1\)I_2\(\ell,k_2\)\big|\mathcal{F}_{\ell h}\big]\cr
\ar\ar\qquad=-\frac{\< \chi, g\>}{n^2} \mathbbm{E}\big[D_{{k_1}\sim\ell h}^{n, \mathbbm{1}}\(f\)\mid\mathcal{F}_{\ell h}\big]\cdot \mathbbm{E}\big[{\text{\rm$\ind$}}_{\{ \rho_{k_2\sim\ell h}\leq\(\ell+1\)h\}}\big|\mathcal{F}_{\ell h}\big]\cr
\ar\ar\qquad=-\frac{\< \chi, g\>}{n^2} f\(X_{k_1\sim \ell h}^{n,\mathbbm{1}}\(\ell h\)\)\eta\(X_{k_1\sim \ell h}^{n,\mathbbm{1}}\(\ell h\),\mu^{n,\mathbbm{1}}_{\ell h},\mu^{n,\mathbbm{2}}_{\ell h}\)\cr
\ar\ar\qquad\qquad\qquad\times\eta\(X_{k_2\sim \ell h}^{n,\mathbbm{1}}\(\ell h\),\mu^{n,\mathbbm{1}}_{\ell h},\mu^{n,\mathbbm{2}}_{\ell h}\)h^2
+\frac{O\(h^3\)}{n^2}.
\eeqnn
Therefore, we proceed to have
\beqnn
\ar\ar\<{M}_{3}^{n,f},\hat{M}_{3}^{n,g}\>_t\cr
\ar\ar\qquad=\sum_{\ell=0}^{\lfloor t/h\rfloor-1}\sum_{k\sim\ell h}\(\frac{1}{n^2}\langle \chi, g\rangle f\(X_{k\sim \ell h}^{n,\mathbbm{1}}\(\ell h\)\)\eta\(X_{k\sim \ell h}^{n,\mathbbm{1}}\(\ell h\),\mu^{n,\mathbbm{1}}_{\ell h},\mu^{n,\mathbbm{2}}_{\ell h}\)h+\frac{1}{n^2}O\(h^2\)\)\\
\ar\ar\qquad\qquad-\sum_{\ell=0}^{\lfloor t/h\rfloor-1}\sum_{\substack{k_1\sim \ell h\\k_2\sim \ell h}}\frac{1}{n^2}\<\chi, g\> f\(X_{k_1\sim \ell h}^{n,\mathbbm{1}}\(\ell h\)\)h^2\prod_{i = 1}^2\eta\(X_{k_i\sim \ell h}^{n,\mathbbm{1}}\(\ell h\),\mu^{n,\mathbbm{1}}_{\ell h},\mu^{n,\mathbbm{2}}_{\ell h}\)\\
\ar\ar\qquad=\sum_{\ell=0}^{\lfloor t/h\rfloor-1}\frac{1}{n}\<\chi, g\>\<\mu^{n,\mathbbm{1}}_{\ell h},f\(\cdot\)\eta\(\cdot,\mu^{n,\mathbbm{1}}_{\ell h},\mu^{n,\mathbbm{2}}_{\ell h}\)\> h\\
\ar\ar\qquad\qquad -\sum_{\ell=0}^{\lfloor t/h\rfloor-1}\< \chi, g\>\<\mu^{n,\mathbbm{1}}_{\ell h},f\(\cdot\)\eta\(\cdot,\mu^{n,\mathbbm{1}}_{\ell h},\mu^{n,\mathbbm{2}}_{\ell h}\)\> \<\mu^{n,\mathbbm{1}}_{\ell h},\eta\(\cdot,\mu^{n,\mathbbm{1}}_{\ell h},\mu^{n,\mathbbm{2}}_{\ell h}\)\> h^2 +\frac{O\(h\)}{n}.
\eeqnn
The result follows.
\end{proof}

In fact, the tightness of $\{\(\mu^{n,\mathbbm{1}},\mu^{n,\mathbbm{2}}\):n\geq 1\}$ in $D(\mathbbm{R}_{+},M_{F}\(\mathbbm{R}\)^2)$ is equivalent to the tightness of $\{\(\left<\mu^{n,\mathbbm{1}},f\right>,\left<\mu^{n,\mathbbm{2}},g\right>\):n\geq 1\}$
in $D(\mathbbm{R}_{+},\mathbbm{R}^2)$ for any $f,g\in {C_b^2\(\mathbbm{R}\)}$.
In the following we will make some estimations
(see Lemmas \ref{es1}--\ref{es3}) and then prove the tightness of the empirical measure for the branching particle systems.

\begin{lemma}\label{es1}
Assume that $\sup_{n}\mathbbm{E}[\langle\mu_0^{n, \mathbbm{1}}, 1\rangle^{2p} + \langle\mu_0^{n, \mathbbm{2}}, 1\rangle^{2p}] < \infty$ for any $p \ge 1.$ There exists a constant $K = K(p, T)$ such that
$$\sup_n\mathbbm{E}\Big[\sup_{t\leq T}\big(\langle\mu_t^{n,\mathbbm{1}},1\rangle^{2p} +
\langle\mu_t^{n,\mathbbm{2}},1\rangle^{2p}\big)\Big]\,<\,K.$$
\end{lemma}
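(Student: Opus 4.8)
The plan is to specialize the semimartingale decompositions (\ref{mu_1}) and (\ref{mu_2}) to $f=g\equiv1$ and to estimate the total masses $Z_t^n:=\langle\mu_t^{n,\mathbbm{1}},1\rangle$ and $W_t^n:=\langle\mu_t^{n,\mathbbm{2}},1\rangle$ sequentially. The key structural feature is that, although the migration rate $\eta$ depends on both measures, the migration only removes mass from colony $\mathbbm{1}$ and only feeds colony $\mathbbm{2}$; combined with the boundedness of $\eta$, this means that every term in the estimate for $Z^n$ can be controlled by $Z^n$ alone (so $W^n$ never enters), while the immigration term driving $W^n$ is controlled by $Z^n$. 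Hence I would bound the moments of $Z^n$ first and then those of $W^n$. Letting $h\to0$ in (\ref{mu_1}) (equivalently, working with the continuous-time particle system), the martingale $M_1^{n,f}$ and the Laplacian part of $A^{n,f}$ drop out at $f\equiv1$ because $f'=f''=0$, leaving
\[
Z_t^n=Z_0^n+M_2^{n,1}(t)-M_3^{n,1}(t)+b_{\mathbbm{1}}^n\int_0^tZ_s^n\,ds-\int_0^t\langle\mu_s^{n,\mathbbm{1}},\eta(\cdot,\mu_s^{n,\mathbbm{1}},\mu_s^{n,\mathbbm{2}})\rangle\,ds,
\]
with $b_{\mathbbm{1}}^n=\lambda_{n,\mathbbm{1}}\beta_{n,\mathbbm{1}}/n$ bounded uniformly in $n$. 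Since $\eta\ge0$, the last term is nonpositive and may be discarded for an upper bound, so $Z^n$ is dominated by a branching mass process with purely linear drift.

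The engine is the Burkholder--Davis--Gundy (BDG) inequality together with Gronwall. Raising to the $2p$-th power, taking $\sup_{s\le t}$ and expectations, and using $(\sum a_i)^{2p}\le C_p\sum a_i^{2p}$, I obtain
\[
\mathbbm{E}\Big[\sup_{s\le t}(Z_s^n)^{2p}\Big]\le C_p\Big(\mathbbm{E}[(Z_0^n)^{2p}]+\mathbbm{E}\sup_{s\le t}|M_2^{n,1}(s)|^{2p}+\mathbbm{E}\sup_{s\le t}|M_3^{n,1}(s)|^{2p}+(b_{\mathbbm{1}}^n)^{2p}\mathbbm{E}\Big[\Big(\int_0^tZ_s^n\,ds\Big)^{2p}\Big]\Big).
\]
By BDG and the quadratic-variation formulas (\ref{M_2}) and (\ref{M_3}) at $f\equiv1$ (so $\langle M_2^{n,1}\rangle_t=\tfrac{\sigma^2_{n,\mathbbm{1}}\lambda_{n,\mathbbm{1}}}{n}\int_0^tZ_s^n\,ds$ and $\langle M_3^{n,1}\rangle_t=\tfrac1n\int_0^t\langle\mu_s^{n,\mathbbm{1}},\eta\rangle\,ds\le\tfrac{\|\eta\|_\infty}{n}\int_0^tZ_s^n\,ds$), each martingale term is bounded by a constant times $\mathbbm{E}[(\int_0^tZ_s^n\,ds)^p]$; the prefactor $\sigma^2_{n,\mathbbm{1}}\lambda_{n,\mathbbm{1}}/n$ stays bounded in $n$ since $\lambda_{n,\mathbbm{1}}/n\to\lambda_{\mathbbm{1}}$ and $\sigma_{n,\mathbbm{1}}\to\sigma_{\mathbbm{1}}$. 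Using Jensen ($(\int_0^tZ\,ds)^q\le t^{q-1}\int_0^tZ^q\,ds$) and $z^p\le1+z^{2p}$, every right-hand term is dominated by $C(1+\int_0^t\mathbbm{E}[\sup_{u\le s}(Z_u^n)^{2p}]\,ds)$ with constants uniform in $n$. Writing $\Phi^Z(t)=\mathbbm{E}[\sup_{s\le t}(Z_s^n)^{2p}]$ this gives $\Phi^Z(t)\le C_1+C_2\int_0^t\Phi^Z(s)\,ds$, and Gronwall yields $\sup_n\Phi^Z(T)\le C_1e^{C_2T}<\infty$, the uniformity coming from the hypothesis $\sup_n\mathbbm{E}[(Z_0^n)^{2p}]<\infty$.

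With colony $\mathbbm{1}$ controlled, I would repeat the scheme for $W^n$ using (\ref{mu_2}). Its drift $\hat A^{n,g}$ at $g\equiv1$ contains the immigration term $\langle\chi,1\rangle\int_0^t\langle\mu_s^{n,\mathbbm{1}},\eta\rangle\,ds\le\langle\chi,1\rangle\|\eta\|_\infty\int_0^tZ_s^n\,ds$ (nonnegative but already controlled by the $Z$-bound) and the linear term $b_{\mathbbm{2}}^n\int_0^tW_s^n\,ds$; the quadratic variations (\ref{hat{M}_2}) and (\ref{hat{M}_3}) are of the form $\tfrac{C}{n}\int_0^tW_s^n\,ds$ and $\tfrac{\langle\chi,1\rangle^2}{n}\int_0^t\langle\mu_s^{n,\mathbbm{1}},\eta\rangle\,ds$. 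Since $\mathbbm{E}[(\int_0^tZ_s^n\,ds)^{2p}]\le T^{2p}\sup_n\Phi^Z(T)<\infty$, every $Z$-dependent contribution is an $n$-uniform constant, so the same BDG--Gronwall argument closes on $\Phi^W(t)=\mathbbm{E}[\sup_{s\le t}(W_s^n)^{2p}]$. Adding the two bounds gives the claim.

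The main obstacle is the self-referential character of the estimate and its rigorous justification: the BDG bounds reproduce the very quantity $\int_0^tZ_s^n\,ds$ one is trying to control, so the Gronwall loop must be arranged carefully, and before Gronwall can be invoked one must ensure $\Phi^Z(t)$ is finite. I would handle the latter by the standard localization, stopping at $\tau_M=\inf\{t:Z_t^n\ge M\}$, running the inequality for $Z^n_{\cdot\wedge\tau_M}$ where every quantity is manifestly finite, obtaining a bound independent of $M$, and letting $M\to\infty$ by monotone convergence (with the joint stopping used for the $W$-step). A secondary technical point is that the BDG step for the jump martingales $M_2^{n,1},\hat M_2^{n,1}$ nominally requires control of $\mathbbm{E}[[M]^p]$; I would either invoke a sufficient higher-moment assumption on the offspring law or run the estimate through the predictable quadratic variation recorded in (\ref{M_2})--(\ref{hat{M}_3}).
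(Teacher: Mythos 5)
Your proposal is correct and follows essentially the same route as the paper: specialize the semimartingale decompositions \eqref{mu_1}, \eqref{mu_2} to $f=g\equiv1$, bound the martingale parts through the quadratic variations \eqref{M_2}--\eqref{hat{M}_3} and the drift parts by $\int_0^t\langle\mu_s^{n,\cdot},1\rangle\,ds$, and close with Gronwall (the paper runs a joint Gronwall on the sum and then a second pass with Doob's inequality for the supremum, whereas you decouple the colonies sequentially and Gronwall the running supremum directly, but this is the same argument). Your attention to localization and to the optional-versus-predictable quadratic variation in the BDG step is in fact more careful than the paper, which passes over both points silently.
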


\begin{proof}
Replacing $f$ and $g$ with $1$ in Equations (\ref{mu_1}) and \eqref{mu_2}, we have
\beqnn
\langle\mu_{t}^{n,\mathbbm{1}},1\rangle=\langle\mu_{0}^{n,\mathbbm{1}},1\rangle+M_{1}^{n,1}\(t\)+M_{2}^{n,1}\(t\)-M_{3}^{n,1}\(t\)+A^{n,1}\(t\),
\eeqnn
and
\beqnn
\langle\mu_{t}^{n,\mathbbm{2}},1\rangle=\langle\mu_{0}^{n,\mathbbm{2}},1\rangle+\hat{M}_{1}^{n,1}\(t\)+\hat{M}_{2}^{n,1}\(t\)+\hat{M}_{3}^{n,1}\(t\)+\hat{A}^{n,1}\(t\).
\eeqnn
Consequently, by (\ref{{A}}), (\ref{M_1})-(\ref{M_3}) we have
\beqnn
\mathbb{E}\langle\mu_{t}^{n,\mathbbm{1}}, 1\rangle^{2p} \ar\le\ar  K + K \mathbb{E}A^{n,1}(t)^{2p} + K\mathbb{E}\langle M_{1}^{n,1}\rangle_t^p+K\mathbb{E}\langle M_{2}^{n,1}\rangle_t^p + K\mathbb{E}\langle M_{3}^{n,1}\rangle_t^p\cr
\ar\le\ar  K + K \int_0^t \mathbb{E}\langle\mu_{s}^{n,\mathbbm{1}}, 1\rangle^{2p} d s
\eeqnn
for any $t \in [0, T].$ Moreover,  by \eqref{hat{A}}, \eqref{hat{M}_1}-\eqref{hat{M}_3} we have
\beqnn
\mathbb{E} \langle\mu_{t}^{n,\mathbbm{2}} ,1\rangle^{2p} &\le& K + K \mathbb{E}\hat{A}^{n,1}(t)^{2p} + K\mathbb{E}\langle \hat{M}_{1}^{n,1}\rangle_t^p+K\mathbb{E}\langle \hat{M}_{2}^{n,1}\rangle_t^p + K\mathbb{E}\langle \hat{M}_{3}^{n,1}\rangle_t^p\\
&\le& K + K\int_0^t \mathbb{E}\(\langle\mu_s^{n,\mathbbm{1}}, 1\rangle^{2p} +\langle\mu_s^{n,\mathbbm{2}},1\rangle^{2p}\)d s
\eeqnn
for any $t \in [0, T].$ Combining the above, by Gronwall's inequality we have
\beqlb\label{e}
\mathbb{E}\(\langle\mu_{t}^{n,\mathbbm{1}}, 1\rangle^{2p} + \langle \mu_{t}^{n,\mathbbm{2}} ,1\rangle^{2p} \)\le Ke^{Kt}.
\eeqlb
Moreover, it follows from Doob's inequality that
\beqnn
\mathbbm{E}\Big(\sup_{t\leq T}\langle\mu_{t}^{n,\mathbbm{1}},1\rangle^{2p}\Big)
&\leq &K \mathbbm{E}\langle\mu_{0}^{n,\mathbbm{1}},1\rangle^{2p}
+K\mathbbm{E}\big|M_{1}^{n,1}\(T\)\big|^{2p}+K\mathbbm{E}\big|M_{2}^{n,1}\(T\)\big|^{2p}\\
&&+K\mathbbm{E}\big|M_{3}^{n,1}\(T\)\big|^{2p}+K\mathbbm{E}\sup_{t\leq T}\big|A^{n,1}\(t\)\big|^{2p};\\
\mathbbm{E}\Big(\sup_{t\leq T}\langle\mu_{t}^{n,\mathbbm{2}},1\rangle^{2p}\Big)
&\leq &K \mathbbm{E}\langle\mu_{0}^{n,\mathbbm{2}},1\rangle^{2p}
+K\mathbbm{E}\big|\hat{M}_{1}^{n,1}\(T\)\big|^{2p}+K\mathbbm{E}\big|\hat{M}_{2}^{n,1}\(T\)\big|^{2p}\\
&&+K\mathbbm{E}\big|\hat{M}_{3}^{n,1}\(T\)\big|^{2p}+K\mathbbm{E}\sup_{t\leq T}\big|\hat{A}^{n,1}\(t\)\big|^{2p}.
\eeqnn
By Equations (\ref{{A}}), (\ref{M_1})-(\ref{M_3}), we have
\beqnn
\mathbbm{E}\Big(\sup_{t\leq T}\langle\mu_{t}^{n,\mathbbm{1}},1\rangle^{2p}\Big)
&\leq& K \mathbbm{E}\langle\mu_{0}^{n,\mathbbm{1}},1\rangle^{2p} + K \int_0^T \mathbbm{E}\langle\mu_s^{n,\mathbbm{1}},1\rangle^p\d s + K \int_0^T \mathbbm{E}\langle\mu_s^{n,\mathbbm{1}},1\rangle^{2p}\d s\\
&\leq& K_T + K \int_0^T \mathbbm{E}\langle\mu_s^{n,\mathbbm{1}},1\rangle^{2p}\d s,
\eeqnn
where the last inequality follows from \eqref{e} and $K_T$ is a positive constant with $T.$ Moreover, by \eqref{hat{A}}, \eqref{hat{M}_1}-\eqref{hat{M}_3} we have
\beqnn
\mathbbm{E}\Big(\sup_{t\leq T}\langle\mu_{t}^{n,\mathbbm{2}},1\rangle^{2p}\Big)
&\leq& K_T + K \int_0^T\mathbbm{E}\langle\mu_s^{n,\mathbbm{2}},1\rangle^{2p}\d s + K\int_0^T\mathbbm{E}\langle\mu_s^{n,\mathbbm{1}},1\rangle^{2p}\d s.
\eeqnn
By the above equations we have
\beqnn
\mathbbm{E}\sup_{t\leq T}\Big(\langle\mu_{t}^{n,\mathbbm{1}},1\rangle^{2p} + \langle\mu_{t}^{n,\mathbbm{2}},1\rangle^{2p}\Big) \le K_T + K\int_0^T \mathbb{E}\Big(\langle\mu_s^{n,\mathbbm{1}},1\rangle^{2p} + \langle\mu_s^{n,\mathbbm{2}},1\rangle^{2p}\Big)\d s.
\eeqnn
The result follows from Gronwall's inequality.
\end{proof}

\begin{lemma}\label{es2}
For any $t \ge s \ge 0$, $p \ge 1$ and $f,g\in {C_b^2\(\mathbbm{R}\)}$, we have
\beqnn
\mathbbm{E}\Big[\big|\langle{M_2}^{n,f}\rangle_t-\langle{M_2}^{n,f}\rangle_s\big|^p + \big|\langle{\hat{M}_2}^{n,g}\rangle_t-\langle{\hat{M}_2}^{n,g}\rangle_s\big|^p\Big]\leq K|t - s|^p.
\eeqnn
\end{lemma}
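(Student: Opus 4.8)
The plan is to read off explicit formulas for the two quadratic variation processes from Equations (\ref{M_2}) and (\ref{hat{M}_2}) and to bound their increments directly. From (\ref{M_2}), the increment over $[s,t]$ is
\begin{equation*}
\langle M_2^{n,f}\rangle_t-\langle M_2^{n,f}\rangle_s=\frac{\sigma_{n,\mathbbm{1}}^2\lambda_{n,\mathbbm{1}}}{n}\sum_{\ell=\lfloor s/h\rfloor}^{\lfloor t/h\rfloor-1}\langle\mu_{\ell h}^{n,\mathbbm{1}},f^2\rangle\,h+\frac{O(h)}{n},
\end{equation*}
and similarly for $\langle\hat{M}_2^{n,g}\rangle$ with colony $\mathbbm{2}$ and $g$ in place of colony $\mathbbm{1}$ and $f$; the number of indices $\ell$ in the sum is at most $|t-s|/h+1$.

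First I would control the coefficients. Since $\lambda_{n,\mathbbm{1}}/n\to\lambda_{\mathbbm{1}}$ and $\sigma_{n,\mathbbm{1}}\to\sigma_{\mathbbm{1}}$, the factor $\sigma_{n,\mathbbm{1}}^2\lambda_{n,\mathbbm{1}}/n$ is bounded uniformly in $n$; and since $f\in C_b^2(\mathbbm{R})$ is bounded, $\langle\mu_{\ell h}^{n,\mathbbm{1}},f^2\rangle\le\|f\|_\infty^2\langle\mu_{\ell h}^{n,\mathbbm{1}},1\rangle$. Hence, up to the remainder, the increment is dominated in absolute value by $K\sum_\ell\langle\mu_{\ell h}^{n,\mathbbm{1}},1\rangle h$, the sum running over $\lfloor s/h\rfloor\le\ell\le\lfloor t/h\rfloor-1$.

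The main step is H\"older's inequality for the discrete measure assigning mass $h$ to each index $\ell$: writing $N:=\sum_\ell h\le|t-s|+h$,
\begin{equation*}
\Big(\sum_\ell\langle\mu_{\ell h}^{n,\mathbbm{1}},1\rangle h\Big)^p\le N^{p-1}\sum_\ell\langle\mu_{\ell h}^{n,\mathbbm{1}},1\rangle^p\,h.
\end{equation*}
Taking expectations and invoking Lemma \ref{es1}, which gives $\sup_n\mathbbm{E}[\langle\mu_{\ell h}^{n,\mathbbm{1}},1\rangle^p]\le K$ uniformly in $\ell\le T/h$, I obtain
\begin{equation*}
\mathbbm{E}\Big[\Big(\sum_\ell\langle\mu_{\ell h}^{n,\mathbbm{1}},1\rangle h\Big)^p\Big]\le N^{p-1}\cdot KN=KN^p\le K|t-s|^p
\end{equation*}
in the regime $h\le|t-s|$. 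The identical estimate for the colony-$\mathbbm{2}$ term and the elementary inequality $(a+b)^p\le2^{p-1}(a^p+b^p)$ then combine to give the claim.

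The step I expect to require the most care is the bookkeeping of the discretization and the $O(h)/n$ remainder. The remainder accumulates per-step errors of order $h^2$ weighted by the particle numbers, so it equals $\frac{O(h)}{n}$ times a sum of the same form $\sum_\ell\langle\mu_{\ell h}^{n,\mathbbm{1}},1\rangle h$ and is therefore controlled exactly as the main term but with an extra factor $h$, hence is of lower order. One must also check that the endpoint correction in $N\le|t-s|+h$ is harmless once $h\le|t-s|$ --- the regime in which this modulus estimate enters the tightness argument. With these points settled, the whole estimate reduces to the uniform moment bound of Lemma \ref{es1}.
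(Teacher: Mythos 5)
Your proposal is correct and follows essentially the same route as the paper: read the increment off from (\ref{M_2}) and (\ref{hat{M}_2}), bound $f^2$ by a constant, apply H\"older's inequality to the (discrete) Riemann sum over $[s,t]$, and close with the uniform moment bound of Lemma \ref{es1}. The only difference is presentational --- the paper converts the sum directly into $\int_s^t\langle\mu_r^{n,\mathbbm{1}},1\rangle\,dr$ and suppresses the $O(h)$ bookkeeping and the endpoint caveat $h\le|t-s|$ that you spell out explicitly.
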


\begin{proof}
It follows from Equation (\ref{M_2}) 
and  H\"{o}lder inequality that
\begin{equation*}
\begin{split}
\mathbbm{E}\big|\langle{M_2}^{n,f}\rangle_t-\langle{M_2}^{n,f}\rangle_s\big|^p
&\leq \frac{1}{n^p}\mathbbm{E}\left(\sum_{l = \lfloor s/h\rfloor}^{\lfloor t/h\rfloor-1}\left<\mu_{\ell h}^{n,\mathbbm{1}},f^2\right>\sigma^2_{n,{\mathbbm{1}}}\lambda_{n,{\mathbbm{1}}}h+O\(h\)\right)^p\\
&\le K \mathbbm{E}\Big[\int_s^t \langle \mu_{r}^{n, \mathbbm{1}}, 1\rangle \d r\Big]^p \le K|t - s|^p.
\end{split}
\end{equation*}
The last inequalities comes from Lemma~\ref{es1}. Similarly estimations can be carried out
for $\langle{{\hat{M}}_2}^{n,g}\rangle_t$.
\end{proof}

\begin{lemma}\label{es3}
For any $t \ge s \ge 0$, $p \ge 1$ and $f,g\in {C_b^2\(\mathbbm{R}\)}$, we have
\beqnn
\mathbbm{E}|A^{n,f}\(t\)-A^{n,f}\(s\)|^{2p}\leq K|t - s|^{2p}
\eeqnn
and
\beqnn
\mathbbm{E}|\hat{A}^{n,g}\(t\)-\hat{A}^{n,g}\(s\)|^{2p}\leq K|t - s|^{2p}.
\eeqnn
\end{lemma}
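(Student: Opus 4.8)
The plan is to read off the three explicit pieces of $A^{n,f}$ from \eqref{{A}} and to reduce the whole estimate to the total-mass control already furnished by Lemma~\ref{es1}, exactly in the spirit of the proof of Lemma~\ref{es2}. First I would write the increment $A^{n,f}(t)-A^{n,f}(s)$ as the sum of a diffusion part $\tfrac12\int_s^t\langle\mu_r^{n,\mathbbm{1}},f''\rangle\,dr$, a branching part $\tfrac{\beta_{n,\mathbbm{1}}}{n}\lambda_{n,\mathbbm{1}}\sum_{\ell=\lfloor s/h\rfloor}^{\lfloor t/h\rfloor-1}\langle\mu_{\ell h}^{n,\mathbbm{1}},f\rangle h$, and an emigration part $\sum_{\ell=\lfloor s/h\rfloor}^{\lfloor t/h\rfloor-1}\langle\mu_{\ell h}^{n,\mathbbm{1}},\eta(\cdot,\mu_{\ell h}^{n,\mathbbm{1}},\mu_{\ell h}^{n,\mathbbm{2}})f\rangle h$, up to the $O(h)$ remainder. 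Since $f$, $f''$ and $\eta$ are bounded and the coefficient $\tfrac{\beta_{n,\mathbbm{1}}}{n}\lambda_{n,\mathbbm{1}}=\beta_{n,\mathbbm{1}}\cdot\tfrac{\lambda_{n,\mathbbm{1}}}{n}$ is bounded uniformly in $n$, each of the three pieces is dominated by a constant multiple of the colony-$\mathbbm{1}$ total mass $\langle\mu_r^{n,\mathbbm{1}},1\rangle$ integrated (or summed) over the relevant time range.

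The next step is to convert the grid sums into the integral $\int_s^t\langle\mu_r^{n,\mathbbm{1}},1\rangle\,dr$. Because reproduction and migration events are assumed to occur only at the endpoints $\ell h$, the total mass $\langle\mu_r^{n,\mathbbm{1}},1\rangle$ is constant on each subinterval $[\ell h,(\ell+1)h)$, so the Riemann sums $\sum_\ell\langle\mu_{\ell h}^{n,\mathbbm{1}},1\rangle h$ coincide with $\int\langle\mu_r^{n,\mathbbm{1}},1\rangle\,dr$ over the grid-aligned interval, differing from $\int_s^t$ only by boundary contributions absorbed into $O(h)$. This yields $|A^{n,f}(t)-A^{n,f}(s)|\le K\int_s^t\langle\mu_r^{n,\mathbbm{1}},1\rangle\,dr+O(h)$. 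Raising to the power $2p$, applying Jensen's (equivalently H\"older's) inequality to pull out the factor $(t-s)^{2p-1}$, interchanging expectation and integration by Fubini, and finally invoking the uniform bound $\sup_n\mathbbm{E}\sup_{r\le T}\langle\mu_r^{n,\mathbbm{1}},1\rangle^{2p}<K$ from Lemma~\ref{es1}, I obtain
\begin{equation*}
\mathbbm{E}\big|A^{n,f}(t)-A^{n,f}(s)\big|^{2p}\le K(t-s)^{2p-1}\int_s^t\mathbbm{E}\langle\mu_r^{n,\mathbbm{1}},1\rangle^{2p}\,dr\le K|t-s|^{2p}.
\end{equation*}

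The estimate for $\hat{A}^{n,g}$ from \eqref{hat{A}} is entirely parallel: its diffusion and branching parts are controlled by the colony-$\mathbbm{2}$ mass $\langle\mu_r^{n,\mathbbm{2}},1\rangle$, while its immigration part $\langle\chi,g\rangle\sum_\ell\langle\mu_{\ell h}^{n,\mathbbm{1}},\eta(\cdot,\mu_{\ell h}^{n,\mathbbm{1}},\mu_{\ell h}^{n,\mathbbm{2}})\rangle h$ is controlled by the colony-$\mathbbm{1}$ mass through the boundedness of $\eta$ and the finiteness of $\chi$; both masses are handled at once by Lemma~\ref{es1}. The only point that genuinely requires care --- and which I regard as the main obstacle --- is the discrete-to-continuous bookkeeping: one must verify that replacing the grid endpoints $\lfloor t/h\rfloor h$, $\lfloor s/h\rfloor h$ by $t,s$ and accumulating the $O(h)$ remainders does not degrade the power of $|t-s|$, i.e. that all such corrections remain bounded by $K|t-s|^{2p}$. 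Once it is noted that the mass is piecewise constant, so that each mismatch is of order $h$ and harmless for $h$ small relative to $t-s$, the remainder of the argument is the routine combination of boundedness, Jensen's inequality, and Lemma~\ref{es1}.
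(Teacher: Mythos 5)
Your proposal is correct and follows essentially the same route as the paper: decompose the increment of $A^{n,f}$ from \eqref{{A}} into its diffusion, branching and emigration pieces, bound each by $K\int_s^t\langle\mu_r^{n,\mathbbm{1}},1\rangle\,dr$ using the boundedness of $f$, $f''$, $\eta$ and $\beta_{n,\mathbbm{1}}\lambda_{n,\mathbbm{1}}/n$, and then conclude via H\"older's inequality and Lemma~\ref{es1}. The paper's proof is just a terser version of the same argument (it likewise absorbs the grid-endpoint mismatches into the $O(h)$ remainder without further comment).
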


\begin{proof}
It follows from Equation (\ref{{A}}) that
\beqnn
\ar\ar\mathbbm{E}|A^{n,f}\(t\)-A^{n,f}\(s\)|^{2p}\cr
 \ar\ar\qquad= \mathbbm{E}\left[\sum_{\ell=\lfloor s/h\rfloor}^{\lfloor t/h\rfloor}\frac12\int_{\ell h}^{\(\ell+1\)h\wedge t} \left<\mu_r^{n,\mathbbm{1}},f^{''}\right>dr +\sum_{\ell=\lfloor s/h\rfloor}^{\lfloor t/h\rfloor-1}\frac{\be_{n,\mathbbm{1}}}{n}\lambda_{n,\mathbbm{1}}\left<\mu_{\ell h}^{n,\mathbbm{1}},f\right>h\right.\cr
\ar\ar\qquad\qquad\qquad\left.-\sum_{\ell=\lfloor s/h\rfloor}^{\lfloor t/h\rfloor-1}\left<\mu_{\ell h}^{n,\mathbbm{1}},f\(\cdot\)\eta\(\cdot,\mu_{\ell h}^{n,\mathbbm{1}},\mu_{\ell h}^{n,\mathbbm{2}}\)\right>h+O\(h\)\right]^{2p}\cr
\ar\ar\qquad\le K\bigg[\int_s^t \langle \mu_{r}^{n,\mathbbm{1}}, 1\rangle d r\bigg]^{2p} \le K |t - s|^{2p}.
\eeqnn
The last inequality follows by Lemma \ref{es1}. Similar estimation can be carried out for $\hat{A}^{n,g}\(\cdot\)$.
\end{proof}

\begin{theorem}\label{th:existence}
Let $h:= h_n\rightarrow0$ as $n\rightarrow\infty$. The sequence $\left\{\(\mu^{n,\mathbbm{1}},\mu^{n,\mathbbm{2}}\):n\geq 1\right\}$
is tight in $D(\mathbbm{R}_{+},M_F(\mathbbm{R})^2)$.
Furthermore, the limit $\(\mu^{\mathbbm{1}},\mu^{\mathbbm{2}}\)$ lies in $C(\mathbbm{R}_{+},M_F(\mathbbm{R})^2)$ and is a solution
to MP (\ref{mp1},\,\ref{mp2}) with $b_{\mathbbm{1}}=\beta_{\mathbbm{1}}\lambda_{\mathbbm{1}},\ b_{\mathbbm{2}}=\beta_{\mathbbm{2}}\lambda_{\mathbbm{2}}, \gamma_{\mathbbm{1}}=\sigma^2_{\mathbbm{1}}\lambda_{\mathbbm{1}}$ and $\gamma_{\mathbbm{2}}=\sigma^2_{\mathbbm{2}}\lambda_{\mathbbm{2}}.$
\end{theorem}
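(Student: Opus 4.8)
The plan is to prove tightness first and then identify every subsequential limit as a solution of the martingale problem. By the equivalence noted just before Lemma~\ref{es1}, it suffices to prove that, for each fixed $f,g\in C_b^2(\mathbbm{R})$, the real-valued processes $(\langle\mu^{n,\mathbbm{1}},f\rangle,\langle\mu^{n,\mathbbm{2}},g\rangle)$ are tight in $D(\mathbbm{R}_+,\mathbbm{R}^2)$, for which I would verify Aldous's criterion. The pointwise (compact-containment) part is immediate from the uniform moment bound of Lemma~\ref{es1}. For the oscillation control I use the semimartingale decompositions \eqref{mu_1} and \eqref{mu_2}: the finite-variation parts $A^{n,f},\hat A^{n,g}$ have increments controlled by Lemma~\ref{es3}, while the martingale parts are handled through their quadratic variations. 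By \eqref{M_1} and \eqref{hat{M}_1} the brackets of $M_1^{n,f},\hat M_1^{n,g}$ tend to $0$, and by \eqref{M_3}, \eqref{hat{M}_3} so do those of $M_3^{n,f},\hat M_3^{n,g}$; Lemma~\ref{es2} supplies the increment estimate for $\langle M_2^{n,f}\rangle$ and $\langle\hat M_2^{n,g}\rangle$. Combining these with the Burkholder--Davis--Gundy inequality and Lemma~\ref{es1}, I obtain for stopping times $\tau_n\le T$ and $\delta\downarrow 0$ that $\langle\mu^{n,\mathbbm{1}}_{\tau_n+\delta}-\mu^{n,\mathbbm{1}}_{\tau_n},f\rangle\to 0$ in probability, and likewise in colony $\mathbbm{2}$, which is Aldous's condition; hence $\{(\mu^{n,\mathbbm{1}},\mu^{n,\mathbbm{2}})\}$ is tight in $D(\mathbbm{R}_+,M_F(\mathbbm{R})^2)$.

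Next I show that every subsequential limit has continuous paths. Since each particle carries mass $1/n$ and at most one branching/migration event occurs per mesh interval, every jump of $\langle\mu^{n,\mathbbm{1}},f\rangle$ (resp. $\langle\mu^{n,\mathbbm{2}},g\rangle$) is bounded in modulus by $K\|f\|_\infty/n$ (resp. $K\|g\|_\infty/n$), so the maximal jump size tends to $0$. Together with tightness in $D$, this forces any weak limit to lie in $C(\mathbbm{R}_+,M_F(\mathbbm{R})^2)$.

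To identify the limit I pass to a convergent subsequence and invoke the Skorokhod representation theorem, realizing $(\mu^{n,\mathbbm{1}},\mu^{n,\mathbbm{2}})\to(\mu^{\mathbbm{1}},\mu^{\mathbbm{2}})$ almost surely in $D(\mathbbm{R}_+,M_F(\mathbbm{R})^2)$; because the limit is continuous, $J_1$-convergence upgrades to $\mu^{n,\mathbbm{i}}_s\to\mu^{\mathbbm{i}}_s$ weakly for every $s$. I then pass to the limit term by term in \eqref{mu_1} and \eqref{mu_2}. The vanishing of the brackets in \eqref{M_1}, \eqref{M_3}, \eqref{hat{M}_1}, \eqref{hat{M}_3}, with Doob's inequality and Lemma~\ref{es1}, gives $M_1^{n,f},M_3^{n,f},\hat M_1^{n,g},\hat M_3^{n,g}\to 0$. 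For the drift terms one uses $\tfrac{\beta_{n,\mathbbm{i}}}{n}\lambda_{n,\mathbbm{i}}=\beta_{n,\mathbbm{i}}\cdot\tfrac{\lambda_{n,\mathbbm{i}}}{n}\to\beta_{\mathbbm{i}}\lambda_{\mathbbm{i}}=b_{\mathbbm{i}}$, so the Riemann sums in \eqref{{A}}, \eqref{hat{A}} converge to the corresponding integrals in \eqref{mp1}. Finally $M_2^{n,f}\to M^f$ and $\hat M_2^{n,g}\to\hat M^g$; by the standard convergence theorem for square-integrable martingales with converging brackets, together with $\langle M_2^{n,f}\rangle_t\to\sigma_{\mathbbm{1}}^2\lambda_{\mathbbm{1}}\int_0^t\langle\mu^{\mathbbm{1}}_s,f^2\rangle\,ds$ from \eqref{M_2} (and the analogue \eqref{hat{M}_2} for colony $\mathbbm{2}$) and with the vanishing covariation $\langle M_3^{n,f},\hat M_3^{n,g}\rangle_t\to 0$ furnished by the preceding Proposition (the only cross term not already zero by independence of the two colonies' branching), the limits $M^f,\hat M^g$ are continuous martingales with the quadratic variations and zero covariation prescribed by \eqref{mp2} and with $\gamma_{\mathbbm{i}}=\sigma_{\mathbbm{i}}^2\lambda_{\mathbbm{i}}$. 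Rearranging then shows $(\mu^{\mathbbm{1}},\mu^{\mathbbm{2}})$ solves \eqref{mp1}, \eqref{mp2}; the uniform integrability needed throughout is provided by the $2p$-moment bounds of Lemma~\ref{es1}.

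The main obstacle is the passage to the limit in the interaction term $\langle\mu^{n,\mathbbm{1}}_s,\eta(\cdot,\mu^{n,\mathbbm{1}}_s,\mu^{n,\mathbbm{2}}_s)f\rangle$, and its colony-$\mathbbm{2}$ counterpart $\langle\chi,g\rangle\langle\mu^{n,\mathbbm{1}}_s,\eta(\cdot,\mu^{n,\mathbbm{1}}_s,\mu^{n,\mathbbm{2}}_s)\rangle$, since $\eta$ depends on the random measures themselves. Here I exploit that $\eta\in C_b^{+}(\mathbbm{R}\times M_F(\mathbbm{R})^2)$ is jointly continuous and bounded and that $(\mu^{n,\mathbbm{1}}_s,\mu^{n,\mathbbm{2}}_s)\to(\mu^{\mathbbm{1}}_s,\mu^{\mathbbm{2}}_s)$ weakly for every $s$ along the Skorokhod representation, whence $\langle\mu^{n,\mathbbm{1}}_s,\eta(\cdot,\mu^{n,\mathbbm{1}}_s,\mu^{n,\mathbbm{2}}_s)f\rangle\to\langle\mu^{\mathbbm{1}}_s,\eta(\cdot,\mu^{\mathbbm{1}}_s,\mu^{\mathbbm{2}}_s)f\rangle$ pointwise in $s$; the boundedness of $\eta$ and $f$ together with the total-mass bounds of Lemma~\ref{es1} then justify, by dominated convergence, replacing the discretized sums in \eqref{{A}}, \eqref{hat{A}} by the integrals $\int_0^t\langle\mu^{\mathbbm{1}}_s,\eta(\cdot,\mu^{\mathbbm{1}}_s,\mu^{\mathbbm{2}}_s)f\rangle\,ds$ as $h_n\to 0$. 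Care is also needed to bound the difference between the grid evaluations $\mu^{n,\mathbbm{i}}_{\ell h}$ appearing in \eqref{{A}}, \eqref{hat{A}} and the continuous-time process, which is again controlled at order $h_n$ by the estimates underlying Lemmas~\ref{es1}--\ref{es3}.
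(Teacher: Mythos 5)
Your proposal is correct and follows essentially the same route as the paper: tightness from the increment moment bounds of Lemmas~\ref{es1}--\ref{es3} (you package them via Aldous's criterion where the paper cites Theorem~VI.4.1 of \cite{JS03}, but both rest on the same estimates), vanishing of the brackets of $M_1^{n,f}, M_3^{n,f}, \hat M_1^{n,g}, \hat M_3^{n,g}$, and identification of $M_2^{n,f}, \hat M_2^{n,g}$ as the limiting martingales with quadratic variations $\gamma_{\mathbbm{i}}=\sigma_{\mathbbm{i}}^2\lambda_{\mathbbm{i}}$. If anything, you are more explicit than the paper about the points it leaves implicit (path continuity via the $O(1/n)$ jump bound, convergence of the interaction term using the joint continuity of $\eta$, and the vanishing covariation).
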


\begin{proof}
For any $f, g \in {C_b^2(\mathbbm{R})}$ and $t \ge 0$ one can see that $\langle{M_3}^{n,f}\rangle_t \rightarrow 0$ and $\langle{\hat{M}_3}^{n,g}\rangle_t \rightarrow 0$ as $n \rightarrow \infty$ by \eqref{M_3} and \eqref{hat{M}_3}. For any $t \ge s \ge 0$ and $p > 1,$ by  H\"{o}lder inequality, Lemma~\ref{es2} and Lemma~\ref{es3} we have
\beqnn
\mathbb{E}|\langle\mu_t^{n,\mathbbm{1}}, f\rangle - \langle\mu_s^{n,\mathbbm{1}}, f\rangle|^{2p} &\le& K \mathbbm{E}|A^{n,f}\(t\)-A^{n,f}\(s\)|^{2p} + K  \mathbbm{E}\big|\langle{M_2}^{n,f}\rangle_t-\langle{M_2}^{n,f}\rangle_s\big|^p\\
&\le& K|t - s|^p + K|t - s|^{2p}
\eeqnn
and
\beqnn
\mathbb{E}|\langle\mu_t^{n,\mathbbm{2}}, g\rangle - \langle\mu_s^{n,\mathbbm{2}}, g\rangle|^{2p} &\le& K \mathbbm{E}|\hat{A}^{n,g}\(t\)-\hat{A}^{n,g}\(s\)|^{2p} + K \mathbbm{E}\big|\langle{\hat{M}_2}^{n,g}\rangle_t-\langle{\hat{M}_2}^{n,g}\rangle_s\big|^p\\
&\le& K|t - s|^p + K|t - s|^{2p}.
\eeqnn
The tightness of $\{\langle\mu_t^{n,\mathbbm{1}}, f\rangle: 0 \le t \le T\}, \{A^{n,f}(t): 0 \le t \le T\}, \{\langle M_2^{n, f}\rangle_t: 0 \le t \le T\}$ and $ \{\langle\mu_t^{n,\mathbbm{2}}, g\rangle: 0 \le t \le T\}, \{\hat{A}^{n,g}(t): 0 \le t \le T\}, \{\langle \hat{M}_2^{n, g}\rangle_t: 0 \le t \le T\}$ on $C([0, T], \mathbbm{R})$ follows from Theorem VI.4.1 in \cite{JS03}, which implies that $\{(\mu_t^{n, \mathbbm{1}}, \mu_t^{n, \mathbbm{2}}): 0 \leq t \leq T\}$ is tight on $C([0, T], M_{F}(\mathbbm{R})^2).$ Hence there is a subsequence $\{(\mu_t^{n_k, \mathbbm{1}}, \mu_t^{n_k, \mathbbm{2}}): 0 \leq t \leq T\}$ converges in law. Suppose that $(\mu^{\mathbbm{1}}, \mu^{\mathbbm{2}})$ is the weak limit of $(\mu^{n_k, \mathbbm{1}}, \mu^{n_k, \mathbbm{2}})$ as $k\rightarrow\infty$. For any $f, g \in C_b^2(\mathbbm{R})$, we have
\beqnn
&&\Big(\langle\mu^{n_k, \mathbbm{1}}, f\rangle, A^{n_k, f}, \langle M_2^{n_k, f}\rangle, \langle\mu^{n_k, \mathbbm{2}}, g\rangle,  \hat{A}^{n_k, g}, \langle \hat{M}_2^{n_k, g}\rangle\Big)\\
&&\qquad\qquad \longrightarrow\Big(\langle\mu^{\mathbbm{1}}, f\rangle, A^{f}, \langle M_2^{f}\rangle, \langle\mu^{\mathbbm{2}}, g\rangle,  \hat{A}^{g}, \langle \hat{M}_2^{g}\rangle\Big)
\eeqnn
weakly as $k \rightarrow \infty.$ Furthermore, $\langle M_2^{n_k,f}\rangle_t \rightarrow \langle M^f_2\rangle_t$, $\langle\hat{M}_2^{n_k,g}\rangle_t \rightarrow \langle\hat{M}_2^g\rangle_t$ as $k \rightarrow \infty$, and for $t \le T,$
\beqnn
\langle M_2^{n_k,f}\rangle_t \le K\big(1 + \sup_{0 \le t\le T}\<\mu_t^{\mathbbm{1}}, 1\>\big),\quad  \langle\hat{M}_2^{n_k,g}\rangle_t \le K\big(1 + \sup_{0 \le t\le T}\<\mu_t^{\mathbbm{2}}, 1\>\big).
\eeqnn
By Lemma~\ref{es1} we can pass to the limit to conclude that $M^f_2(t)$ and $\hat{M}_2^g(t)$ are martingales. Let $b_{\mathbbm{1}}=\beta_{\mathbbm{1}}\lambda_{\mathbbm{1}}$, $b_{\mathbbm{2}}=\beta_{\mathbbm{2}}\lambda_{\mathbbm{2}}$, $\gamma_{\mathbbm{1}}=\sigma^2_{\mathbbm{1}}\lambda_{\mathbbm{1}}$
and $\gamma_{\mathbbm{2}}=\sigma^2_{\mathbbm{2}}\lambda_{\mathbbm{2}}$. One can see that
\beqnn
\langle M^{f}_2\rangle_t=\gamma_{\mathbbm{1}}\int_0^t\langle\mu_s^{\mathbbm{1}},f^2\rangle ds
\qquad\text{
and}\qquad
\langle{\hat{M}}^{g}_2\rangle_t=\gamma_{\mathbbm{2}}\int_0^t\langle \mu_s^{\mathbbm{2}},g^2\rangle ds
\eeqnn
by \eqref{M_2} and \eqref{hat{M}_2}. It implies that  $\(\mu^{\mathbbm{1}},\,\mu^{\mathbbm{2}}\)$  is a solution to MP (\ref{mp1},\,\ref{mp2}). The result follows.
\end{proof}

\section{Uniqueness of solution to the martingale problem}\label{uniqueness}
In this section, we first derive the SPDEs satisfied by the distribution-function-valued processes of the mutually interacting superprocesses with migration, and then establish its equivalence with MP (\ref{mp1},\,\ref{mp2}). Moreover, the pathwise uniqueness of the SPDEs is proved by an extended Yamada-Watanabe argument.

\subsection{A related system of SPDEs}

 For any $y\in\mathbbm{R}$, we write
\begin{equation}\label{eq:rfu}
u_t^{\mathbbm{1}}\(y\)=\mu_t^{\mathbbm{1}}\((-\infty,y]\)\,\qquad\text{and}\qquad\,u_t^{\mathbbm{2}}\(y\)=\mu_t^{\mathbbm{2}}\((-\infty,y]\)
\end{equation}
as the distribution-function-valued processes for the mutually interacting superprocesses with migration $\(\mu_t^{\mathbbm{1}},\mu_t^{\mathbbm{2}}\)_{t\geq0}$. For any $x,y\in\mathbbm{R}\cup\{\pm\infty\}$, $\nu_1,\nu_2\in M_{F}\(\mathbbm{R}\)$ and $\eta\(\cdot,\nu_1,\nu_2\)\in C_b^{+}\(\mathbbm{R}\times M_F(\mathbbm{R})^2\)$, denote by
$\xi\(y,\nu_1,\nu_2\)=\int_{-\infty}^{y}\eta\(x,\nu_1,\nu_2\)\nu_1\(dx\).$
Let $W^{\mathbbm{i}}(\d s\,\d a)$
be independent space-time white noise random measures on $\mathbbm{R}_{+}\times\mathbbm{R}$ with intensity $ds\,da$ and $\mathbbm{i}\in\{\mathbbm{1,2}\}$. We consider the following SPDEs: for any
$t\in\mathbbm{R}_{+}$ and $y\in\mathbbm{R}$,
\begin{equation}\label{eq:spdes}
\begin{cases}
u_t^{\mathbbm{1}}\(y\)&=u_0^{\mathbbm{1}}\(y\)+\sqrt{\gamma_{\mathbbm{1}}}\int_0^t\int_{0}^{u_s^{\mathbbm{1}}\(y\)}W^{\mathbbm{1}}\(d s\, d a\)
+\int_0^t\big(\frac{\Delta}{2} u_s^{\mathbbm{1}}\(y\)+b_{\mathbbm{1}}u_s^{\mathbbm{1}}\(y\)\big)d s \\
&\quad-\int_0^t \xi\(y,\mu_s^{\mathbbm{1}},\mu_s^{\mathbbm{2}}\)ds,\\
u_t^{\mathbbm{2}}\(y\)&=u_0^{\mathbbm{2}}\(y\)+\sqrt{\gamma_{\mathbbm{2}}}\int_0^t\int_{0}^{u_s^{\mathbbm{2}}\(y\)}W^{\mathbbm{2}}\(ds\, da\)
+\int_0^t\big(\frac{\Delta}{2} u_s^{\mathbbm{2}}\(y\)+b_{\mathbbm{2}}u_s^{\mathbbm{2}}\(y\)\big) d s\\
&\quad +\dot{\chi}\(y\)\xi\(+\infty,\mu_s^{\mathbbm{1}},\mu_s^{\mathbbm{2}}\)ds,
\end{cases}
\end{equation}
where $\dot{\chi}$ is the distribution function of $\chi$, i.e., $\dot{\chi}\(y\)=\chi\((-\infty,y]\)$.
\begin{definition}
The SPDEs (\ref{eq:spdes}) have a weak solution if there exists a $C_{b,m}\(\mathbbm{R}\)^2$-valued process $\(u_t^{\mathbbm{1}},u_t^{\mathbbm{2}}\)_{t\geq0}$ on a stochastic basis such that for any $f,\,g\in C_0^2\(\mathbbm{R}\)$ and $t\ge 0,$ the following holds:
\begin{equation}\label{eq:fgspdes}
\begin{cases}
\langle u_t^{\mathbbm{1}},f\rangle_1=\langle u_0^{\mathbbm{1}},f\rangle_1 +\sqrt{\gamma_{\mathbbm{1}}}\int_0^t\int_{0}^{\infty}\int_{\mathbbm{R}}f\(y\){\text{\rm$\ind$}}_{\left\{a\leq u_s^{\mathbbm{1}}\(y\)\right\}}dyW^{\mathbbm{1}}\(ds\, da\)\\
\,\,\,\,\,\,\,\,\,\,\,\,\,\,\,\,\,\,\,\,\,\,\,\,\,\,\,\,\,\,\,\,
+\int_0^t\big[ \langle \frac{\triangle}{2} u_s^{\mathbbm{1}},f\rangle_1 +b_{\mathbbm{1}}\langle u_s^{\mathbbm{1}},f\rangle_1 - \langle\xi\(\cdot,\mu_{s}^{\mathbbm{1}},\mu_{s}^{\mathbbm{2}}\),f\rangle_1\big]ds,\\
\langle u_t^{\mathbbm{2}},g\rangle_1=
\langle u_0^{\mathbbm{2}},g\rangle_1 +\sqrt{\gamma_{\mathbbm{2}}}\int_0^t\int_{0}^{\infty}\int_{\mathbbm{R}}g\(y\)
{\text{\rm$\ind$}}_{\left\{a\leq u_s^{\mathbbm{2}}\(y\)\right\}}dyW^{\mathbbm{2}}\(ds\, da\)\\
\,\,\,\,\,\,\,\,\,\,\,\,\,\,\,\,\,\,\,\,\,\,\,\,\,\,\,\,\,\,\,\,
+\int_0^t\big[ \langle  \frac{\triangle}{2}u_s^{\mathbbm{2}}, g\rangle_1 +b_{\mathbbm{2}}\langle u_s^{\mathbbm{2}},g\rangle_1+\langle\dot\chi,g\rangle_1\xi\(+\infty,\mu_{s}^{\mathbbm{1}},\mu_{s}^{\mathbbm{2}}\)\big]ds.
\end{cases}
\end{equation}
\end{definition}

\begin{proposition}\label{prop11}
Suppose that $\(u_t^{\mathbbm{1}},u_t^{\mathbbm{2}}\)_{t\geq0}$ is a solution to the system of SPDEs (\ref{eq:spdes}). Then the corresponding measure-valued process $\(\mu_t^{\mathbbm{1}},\mu_t^{\mathbbm{2}}\)_{t\geq 0}$ is a solution to MP (\ref{mp1}, \ref{mp2}).
\end{proposition}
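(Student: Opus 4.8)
The plan is to exploit the one-to-one correspondence between a finite measure and its distribution function: the nondecreasing bounded $u_t^{\mathbbm{i}}$ is the distribution function of a finite measure $\mu_t^{\mathbbm{i}}$ via (\ref{eq:rfu}). For $f\in C_0^2\(\mathbbm{R}\)$ and each fixed $t$, integration by parts gives $\<\mu_t^{\mathbbm{i}},f\>=-\int_{\mathbbm{R}}f'\(y\)u_t^{\mathbbm{i}}\(y\)\,dy$ for $\mathbbm{i}\in\{\mathbbm{1},\mathbbm{2}\}$, the boundary terms vanishing since $f$ has compact support and $u_t^{\mathbbm{i}}$ is bounded. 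I would therefore test the first (resp. second) equation of the system (\ref{eq:spdes}) against $-f'$ (resp. $-g'$), integrate over $y\in\mathbbm{R}$, and match the outcome term by term against the decomposition of $\<\mu_t^{\mathbbm{1}},f\>$ (resp. $\<\mu_t^{\mathbbm{2}},g\>$) read off from (\ref{mp1}). The two systematic tools are the distributional identity $\partial_y u_s^{\mathbbm{i}}=\mu_s^{\mathbbm{i}}$ and the elementary facts $\xi\(+\infty,\mu_s^{\mathbbm{1}},\mu_s^{\mathbbm{2}}\)=\<\mu_s^{\mathbbm{1}},\eta\(\cdot,\mu_s^{\mathbbm{1}},\mu_s^{\mathbbm{2}}\)\>$ and $\int_{\mathbbm{R}}g'\(y\)\dot\chi\(y\)\,dy=-\<\chi,g\>$.

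For the drift terms this is a sequence of integrations by parts. Testing the Laplacian term against $-f'$ and transferring one derivative via $\partial_y u_s^{\mathbbm{1}}=\mu_s^{\mathbbm{1}}$ and the other onto the test function yields $\tfrac12\int_0^t\<\mu_s^{\mathbbm{1}},f''\>\,ds$, which needs only $f\in C_0^2$; the $b_{\mathbbm{1}}$ term gives $b_{\mathbbm{1}}\int_0^t\<\mu_s^{\mathbbm{1}},f\>\,ds$ at once. Since $\partial_y\xi\(y,\mu_s^{\mathbbm{1}},\mu_s^{\mathbbm{2}}\)=\eta\(y,\mu_s^{\mathbbm{1}},\mu_s^{\mathbbm{2}}\)\mu_s^{\mathbbm{1}}\(dy\)$, the migration term becomes $-\int_0^t\<\mu_s^{\mathbbm{1}},\eta\(\cdot,\mu_s^{\mathbbm{1}},\mu_s^{\mathbbm{2}}\)f\>\,ds$, exactly as in (\ref{mp1}). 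In colony $\mathbbm{2}$ the same computation converts the immigration drift into $\<\chi,g\>\int_0^t\<\mu_s^{\mathbbm{1}},\eta\(\cdot,\mu_s^{\mathbbm{1}},\mu_s^{\mathbbm{2}}\)\>\,ds$, i.e. the cross term in the second line of (\ref{mp1}).

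It remains to identify the martingale parts. Testing the white-noise term against $-f'$ and interchanging the spatial and stochastic integrals by the stochastic Fubini theorem (justified as $f'$ is bounded with compact support and $u_s^{\mathbbm{1}}$ is bounded) represents the residual term as $M_t^{f}=\sqrt{\gamma_{\mathbbm{1}}}\int_0^t\int_0^\infty h_s\(a\)\,W^{\mathbbm{1}}\(ds\,da\)$ with $h_s\(a\)=-\int_{\mathbbm{R}}f'\(y\)\ind_{\{a\le u_s^{\mathbbm{1}}\(y\)\}}\,dy$, a continuous square-integrable martingale. As $u_s^{\mathbbm{1}}$ is nondecreasing, $\{y:u_s^{\mathbbm{1}}\(y\)\ge a\}=[\tau_s\(a\),\infty)$ for $0<a\le\<\mu_s^{\mathbbm{1}},1\>$ with $\tau_s$ the generalized inverse of $u_s^{\mathbbm{1}}$, so $h_s\(a\)=f\(\tau_s\(a\)\)$; the quantile change of variables $\int_0^{\<\mu_s^{\mathbbm{1}},1\>}\phi\(\tau_s\(a\)\)\,da=\<\mu_s^{\mathbbm{1}},\phi\>$ applied with $\phi=f^2$ then gives $\<M^{f}\>_t=\gamma_{\mathbbm{1}}\int_0^t\int_0^\infty h_s\(a\)^2\,da\,ds=\gamma_{\mathbbm{1}}\int_0^t\<\mu_s^{\mathbbm{1}},f^2\>\,ds$. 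The identical argument for $W^{\mathbbm{2}}$ yields $\<\hat M^{g}\>_t$, and $\<M^{f},\hat M^{g}\>_t=0$ follows at once from the independence of $W^{\mathbbm{1}}$ and $W^{\mathbbm{2}}$.

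The main obstacle is this last, martingale, step: one must justify the stochastic Fubini interchange for the white-noise integral and, above all, establish the quadratic-variation identity through the quantile representation $h_s\(a\)=f\(\tau_s\(a\)\)$ and the fact that $\tau_s$ pushes Lebesgue measure on $\(0,\<\mu_s^{\mathbbm{1}},1\>\)$ forward to $\mu_s^{\mathbbm{1}}$. The drift matching is routine, the only subtlety being to carry out the Laplacian integration by parts at the available $C_0^2$ regularity rather than inadvertently demanding a third derivative of $f$.
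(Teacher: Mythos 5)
Your proposal is correct and follows essentially the same route as the paper: test the weak form of (\ref{eq:spdes}) against $-f'$ and $-g'$, integrate by parts using $\partial_y u_s^{\mathbbm{i}}=\mu_s^{\mathbbm{i}}$ and $\partial_y\xi(y,\mu_s^{\mathbbm{1}},\mu_s^{\mathbbm{2}})=\eta(y,\mu_s^{\mathbbm{1}},\mu_s^{\mathbbm{2}})\mu_s^{\mathbbm{1}}(dy)$, identify the martingale part as $\sqrt{\gamma_{\mathbbm{1}}}\int_0^t\int_0^\infty f\big((u_s^{\mathbbm{1}})^{-1}(a)\big)W^{\mathbbm{1}}(ds\,da)$ via the generalized inverse, and compute the quadratic variation by pushing Lebesgue measure forward to $\mu_s^{\mathbbm{1}}$, with independence of $W^{\mathbbm{1}},W^{\mathbbm{2}}$ giving the zero covariation. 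The only cosmetic difference is that the paper works with $f,g\in C_0^3$ (so that $f'$ is an admissible test function in (\ref{eq:fgspdes})) and leaves the extension to $C_0^2$ implicit, whereas you transfer only one derivative onto the test function.
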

\proof
For a non-decreasing continuous function $h$ on $\mathbbm{R}$, the inverse function is defined as
$h^{-1}\(a\)=\inf\{x:h\(x\)>a\}.$ Then for any $f,g\in C_0^3\(\mathbbm{R}\)$, we have
\begin{equation*}
\begin{split}
\langle\mu_t^{\mathbbm{1}},f\rangle=&-\langle u_t^{\mathbbm{1}},f'\rangle_1\\
=&-\langle u_0^{\mathbbm{1}},f'\rangle_1 -\sqrt{\gamma_{\mathbbm{1}}}\int_0^t\int_{0}^{\infty}\int_{\mathbbm{R}}f'\(y\){\text{\rm$\ind$}}_{\left\{a\leq u_s^{\mathbbm{1}}\(y\)\right\}}dyW^{\mathbbm{1}}\(ds\, da\)\\
&\qquad-\int_0^t\( \langle \frac{\triangle}{2} u_s^{\mathbbm{1}},f'\rangle_1 +b_{\mathbbm{1}}\langle u_s^{\mathbbm{1}},f'\rangle_1-\langle\xi\(\cdot,\mu_s^{\mathbbm{1}},\mu_s^{\mathbbm{2}}\),f'\rangle_1\)ds\\
=&\langle\mu_0^{\mathbbm{1}},f\rangle+\sqrt{\gamma_{\mathbbm{1}}}\int_0^t\int_{0}^{\infty}f\(\(u_s^{\mathbbm{1}}\)^{-1}\(a\)\)W^{\mathbbm{1}}\(ds\, da\)\\
&\qquad+\int_0^t\( \langle \mu_s^{\mathbbm{1}},\frac12f''\rangle+b_{\mathbbm{1}}\langle \mu_s^{\mathbbm{1}},f\rangle -\langle\mu_s^{\mathbbm{1}},\eta\(\cdot,\mu_s^{\mathbbm{1}},\mu_s^{\mathbbm{2}}\)f\rangle\)ds
\end{split}
\end{equation*}
and
\begin{equation*}
\begin{split}
\langle\mu_t^{\mathbbm{2}},g\rangle=&-\langle u_t^{\mathbbm{2}},g'\rangle_1\\
=&-\langle u_0^{\mathbbm{2}},g'\rangle_1 -\sqrt{\gamma_{\mathbbm{2}}}\int_0^t\int_{0}^{\infty}\int_{\mathbbm{R}}g'\(y\){\text{\rm$\ind$}}_{\left\{a\leq u_s^{\mathbbm{2}}\(y\)\right\}}dyW^{\mathbbm{2}}\(ds\, da\)\\
&\qquad-\int_0^t\( \langle \frac{\triangle}{2} u_s^{\mathbbm{2}},g'\rangle_1 +b_{\mathbbm{2}}\langle u_s^{\mathbbm{2}},g'\rangle_1 + \langle\dot\chi,g' \rangle_1\xi\(+\infty,\mu_s^{\mathbbm{1}},\mu_s^{\mathbbm{2}}\)\)ds\\
=&\langle \mu_0^{\mathbbm{2}},g\rangle+\sqrt{\gamma_{\mathbbm{2}}}\int_0^t\int_{0}^{\infty}g\(\(u_s^{\mathbbm{2}}\)^{-1}\(a\)\)W^{\mathbbm{2}}\(ds\, da\)\\
&\qquad+\int_0^t\( \langle \mu_s^{\mathbbm{2}},\frac12g''\rangle+b_{\mathbbm{2}}\langle \mu_s^{\mathbbm{2}},g\rangle +\langle\chi,g\rangle\xi\(+\infty,\mu_s^{\mathbbm{1}},\mu_s^{\mathbbm{2}}\)\)ds.
\end{split}
\end{equation*}
Thus, $M_t^f$ and $\hat{M}_t^g$ are martingales with quadratic variation processes
\begin{equation*}
\begin{split}
\langle M^f\rangle_t=&\,\,\gamma_{\mathbbm{1}}\int_0^t\int_0^{\infty}f^2\(\(u_s^{\mathbbm{1}}\)^{-1}\(a\)\)ds\,da\\
=&\,\,\gamma_{\mathbbm{1}}\int_0^t\int_{\mathbbm{R}}f^2\(y\)dsd\(u_s^{\mathbbm{1}}\(y\)\)
=\gamma_{\mathbbm{1}}\int_0^t\langle\mu_s^{\mathbbm{1}},f^2\rangle ds
\end{split}
\end{equation*}
and
\begin{equation*}
\begin{split}
\langle \hat{M}^g\rangle_t=&\,\,\gamma_{\mathbbm{2}}\int_0^t\int_0^{\infty}g^2\(\(u_s^{\mathbbm{2}}\)^{-1}\(a\)\)ds\,da\\
=&\,\,\gamma_{\mathbbm{2}}\int_0^t\int_{\mathbbm{R}}g^2\(y\)dsd\(u_s^{\mathbbm{2}}\(y\)\)
=\gamma_{\mathbbm{2}}\int_0^t\langle\mu_s^{\mathbbm{2}},g^2\rangle ds.
\end{split}
\end{equation*}
The independence of $W^{\mathbbm{1}}$ and $W^{\mathbbm{2}}$ leads to $\langle M^f, \hat{M}^g\rangle_t=0.$ Therefore, $\(\mu_t^{\mathbbm{1}},\mu_t^{\mathbbm{2}}\)_{t\geq 0}$ is a solution to MP (\ref{mp1},\,\ref{mp2}). That completes the proof.
\qed

\begin{proposition}\label{prop2}
Suppose that $\(\mu_t^{\mathbbm{1}},\mu_t^{\mathbbm{2}}\)_{t\geq 0}$ is a solution to MP (\ref{mp1}, \ref{mp2}) and ${\eta}\(\cdot,\nu_1,\nu_2\)\in C_0^{1}\(\mathbbm{R}\)$ for any $\nu_1,\nu_2\in M_F\(\mathbbm{R}\)$. Then
the random field $\(u_t^{\mathbbm{1}},u_t^{\mathbbm{2}}\)_{t\geq0}$ defined by (\ref{eq:rfu}) is a weak solution to SPDEs (\ref{eq:spdes}).
\end{proposition}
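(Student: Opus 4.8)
The plan is to reverse the computation in the proof of Proposition~\ref{prop11}: the initial term and the drift terms are recovered by an integration-by-parts (Fubini) dictionary between $\mu^{\mathbbm{i}}$ and $u^{\mathbbm{i}}$, while the genuine work is to realize the martingale part of the martingale problem as a stochastic integral against a pair of \emph{independent} space-time white noises. Accordingly I split the argument into (i) rewriting the martingale problem in terms of the distribution functions and (ii) constructing $W^{\mathbbm{1}}$ and $W^{\mathbbm{2}}$.

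For step (i), fix $f\in C_0^2(\mathbbm{R})$ and set $F(x)=\int_x^{\infty}f(y)\,dy$, so that $F\in C_b^2(\mathbbm{R})$ with $F'=-f$ and $F''=-f'$ of compact support. Since $u_t^{\mathbbm{1}}(y)=\int \ind_{\{x\le y\}}\,\mu_t^{\mathbbm{1}}(dx)$, Fubini's theorem yields the identities $\langle u_t^{\mathbbm{1}},f\rangle_1=\langle\mu_t^{\mathbbm{1}},F\rangle$, $\langle u_s^{\mathbbm{1}},f''\rangle_1=\langle\mu_s^{\mathbbm{1}},F''\rangle$, and $\langle\xi(\cdot,\mu_s^{\mathbbm{1}},\mu_s^{\mathbbm{2}}),f\rangle_1=\langle\mu_s^{\mathbbm{1}},\eta(\cdot,\mu_s^{\mathbbm{1}},\mu_s^{\mathbbm{2}})F\rangle$, the last being exactly the definition of $\xi$ (here the hypothesis $\eta(\cdot,\nu_1,\nu_2)\in C_0^1$ guarantees the relevant integrability). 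I would first extend the martingale problem (\ref{mp1},\,\ref{mp2}) from $C_0^2$ to the bounded function $F$ via a cut-off $F\zeta_R$ with $\zeta_R\uparrow 1$, passing to the limit using the finiteness of $\mathbbm{E}\int_0^T\langle\mu_s^{\mathbbm{i}},1\rangle\,ds$ (obtained from the martingale problem by a computation analogous to Lemma~\ref{es1}). Substituting the three identities into (\ref{mp1}) then gives, for every $f\in C_0^2$,
\[
\langle u_t^{\mathbbm{1}},f\rangle_1=\langle u_0^{\mathbbm{1}},f\rangle_1+M_t^F+\int_0^t\Big[\langle \tfrac{\triangle}{2}u_s^{\mathbbm{1}},f\rangle_1+b_{\mathbbm{1}}\langle u_s^{\mathbbm{1}},f\rangle_1-\langle\xi(\cdot,\mu_s^{\mathbbm{1}},\mu_s^{\mathbbm{2}}),f\rangle_1\Big]\,ds,
\]
and the analogous identity in colony $\mathbbm{2}$ (where $\langle\chi,G\rangle=\langle\dot\chi,g\rangle_1$ and $\langle\mu_s^{\mathbbm{1}},\eta\rangle=\xi(+\infty,\mu_s^{\mathbbm{1}},\mu_s^{\mathbbm{2}})$ produce the immigration term). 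Thus (\ref{eq:fgspdes}) holds once $M_t^F$ is identified with the claimed white-noise integral.

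Step (ii) is the crux. The family $\{M_t^f:f\in C_0^2\}$ extends to a continuous orthogonal martingale measure $M^{\mathbbm{1}}(ds,dx)$ on $\mathbbm{R}_+\times\mathbbm{R}$ with covariance measure $\gamma_{\mathbbm{1}}\,\mu_s^{\mathbbm{1}}(dx)\,ds$, and likewise $M^{\mathbbm{2}}$; by (\ref{mp2}) these are mutually orthogonal. The idea is to push $M^{\mathbbm{1}}$ through the monotone change of variables $a=u_s^{\mathbbm{1}}(x)$, which transports $\mu_s^{\mathbbm{1}}(dx)$ to Lebesgue measure $da$ on $[0,\langle\mu_s^{\mathbbm{1}},1\rangle)$. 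On a possibly enlarged space carrying an auxiliary white noise $\widetilde W^{\mathbbm{1}}$ independent of everything, I define, for $\psi\in L^2(\mathbbm{R}_+\times\mathbbm{R}_+)$,
\[
W^{\mathbbm{1}}(\psi)=\frac{1}{\sqrt{\gamma_{\mathbbm{1}}}}\int_0^{\infty}\!\int_{\mathbbm{R}}\psi\big(s,u_s^{\mathbbm{1}}(x)\big)\,M^{\mathbbm{1}}(ds,dx)+\int_0^{\infty}\!\int_0^{\infty}\ind_{\{a\ge \langle\mu_s^{\mathbbm{1}},1\rangle\}}\,\psi(s,a)\,\widetilde W^{\mathbbm{1}}(ds,da).
\]
Computing its covariation and using the change of variables shows $W^{\mathbbm{1}}$ has the deterministic intensity $ds\,da$; being continuous and orthogonal, it is a space-time white noise by L\'evy's characterization for martingale measures (in the sense of Walsh's theory). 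Testing against $\psi(s,a)=\sqrt{\gamma_{\mathbbm{1}}}\,\ind_{[0,t]}(s)\int_{\mathbbm{R}}f(y)\ind_{\{a\le u_s^{\mathbbm{1}}(y)\}}\,dy$ and noting $\psi(s,u_s^{\mathbbm{1}}(x))=\sqrt{\gamma_{\mathbbm{1}}}\,F(x)$ while $\psi\equiv 0$ on $\{a\ge\langle\mu_s^{\mathbbm{1}},1\rangle\}$, one recovers $W^{\mathbbm{1}}(\psi)=\int_0^t\!\int_{\mathbbm{R}}F(x)\,M^{\mathbbm{1}}(ds,dx)=M_t^F$, which closes the identification. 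The noise $W^{\mathbbm{2}}$ is built identically, and the orthogonality of $M^{\mathbbm{1}},M^{\mathbbm{2}}$ together with the Gaussianity of white noise gives the required independence of $W^{\mathbbm{1}}$ and $W^{\mathbbm{2}}$.

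The main obstacle is precisely this white-noise construction: the integrand $\ind_{\{a\le u_s^{\mathbbm{1}}(y)\}}$ degenerates for levels $a$ above the total mass $\langle\mu_s^{\mathbbm{1}},1\rangle$, so the martingale measure alone does not determine a white noise on all of $\mathbbm{R}_+\times\mathbbm{R}_+$; one must enlarge the space with an independent noise and then verify, via L\'evy's characterization, that the patched object is genuinely white and adapted. A secondary technical point is the regularity $(u_t^{\mathbbm{1}},u_t^{\mathbbm{2}})\in C_{b,m}(\mathbbm{R})^2$: monotonicity and boundedness are immediate from the definition (\ref{eq:rfu}), whereas spatial continuity requires the states $\mu_t^{\mathbbm{i}}$ to be non-atomic, which I would deduce from the absolute continuity of the one-dimensional superprocess states.
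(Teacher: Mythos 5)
Your proposal is correct and follows essentially the same route as the paper's proof: rewrite MP (\ref{mp1},\,\ref{mp2}) in terms of the distribution functions via the antiderivative $\tilde f(y)=\int_y^\infty f(x)\,dx$, and then realize the resulting martingales as stochastic integrals against independent space--time white noises on an extension of the probability space. The differences are only presentational: you derive $\langle\xi(\cdot,\mu_s^{\mathbbm{1}},\mu_s^{\mathbbm{2}}),f\rangle_1=\langle\mu_s^{\mathbbm{1}},\eta(\cdot,\mu_s^{\mathbbm{1}},\mu_s^{\mathbbm{2}})\tilde f\rangle$ directly by Fubini where the paper integrates by parts using $\eta\in C_0^1$, and you build the white noises by hand (level-set change of variables plus an auxiliary noise above the total mass) where the paper invokes Theorem III-7 and Corollary III-8 of \cite{EM90}, which encapsulate exactly that construction.
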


\proof
Let $f, g\in C_0^2\(\mathbbm{R}\)$ and set $\tilde{f}\(y\)=\int_y^{\infty}f\(x\)dx$, $\tilde{g}\(y\)=\int_y^{\infty}g\(x\)dx$. Then we have
\begin{equation*}
\begin{split}
\langle u_t^{\mathbbm{1}}, f\rangle_1
=&\,\,\langle\mu_t^{\mathbbm{1}},\tilde{f}\rangle\\
=&\,\,\langle\mu_0^{\mathbbm{1}},\tilde{f}\rangle+\int_0^t\langle\mu_s^{\mathbbm{1}},\frac12{\tilde{f}}^{''}\rangle ds
+b_{\mathbbm{1}}\int_0^t\langle\mu_s^{\mathbbm{1}},\tilde{f}\rangle ds-\int_0^t\langle\mu_s^{\mathbbm{1}},{\eta}\(\cdot,\mu_s^{\mathbbm{1}},\mu_s^{\mathbbm{2}}\)
\tilde{f}\rangle ds+M_t^{\tilde{f}}\\
=&\,\, \langle u_0^{\mathbbm{1}},{f}\rangle_1+\int_0^t\langle u_s^{\mathbbm{1}},\frac12{{f}}'' \rangle_1 ds
+b_{\mathbbm{1}}\int_0^t\langle u_s^{\mathbbm{1}},{f}\rangle_1 ds
+\int_0^t\big\langle u_s^{\mathbbm{1}},({\eta}(\cdot,\mu_s^{\mathbbm{1}},\mu_s^{\mathbbm{2}})
{\tilde{f}})'\big\rangle_1 ds+M_t^{\tilde{f}}.
\end{split}
\end{equation*}
Note that
\begin{equation*}
\begin{split}
\langle u_s^{\mathbbm{1}},({\eta}(\cdot,\mu_s^{\mathbbm{1}},\mu_s^{\mathbbm{2}})
{\tilde{f}})'\rangle_1
=&\,\,\langle u_s^{\mathbbm{1}},{\eta}'\(\cdot,\mu_s^{\mathbbm{1}},\mu_s^{\mathbbm{2}}\)\tilde{f}-{\eta}\(\cdot,\mu_s^{\mathbbm{1}},\mu_s^{\mathbbm{2}}\)f\rangle_1\\
=&\,\,\langle u_s^{\mathbbm{1}}{\eta}'\(\cdot,\mu_s^{\mathbbm{1}},\mu_s^{\mathbbm{2}}\),\tilde{f}\rangle_1 - \langle u_s^{\mathbbm{1}},{\eta}\(\cdot,\mu_s^{\mathbbm{1}},\mu_s^{\mathbbm{2}}\)f\rangle_1\\
=&\,\,\Big\langle \int_{-\infty}^{\cdot}u_s^{\mathbbm{1}}\(x\){\eta}'\(x,\mu_s^{\mathbbm{1}},\mu_s^{\mathbbm{2}}\)dx,{f}\Big\rangle_1-\langle u_s^{\mathbbm{1}},{\eta}\(\cdot,\mu_s^{\mathbbm{1}},\mu_s^{\mathbbm{2}}\)f\rangle_1\\
=&\,\,-\Big\langle\int_{-\infty}^{\cdot}{\eta}\(x,\mu_s^{\mathbbm{1}},\mu_s^{\mathbbm{2}}\)du_s^{\mathbbm{1}}\(x\),f\Big\rangle_1\\
=&\,\,-\langle\xi\(\cdot,\mu_s^{\mathbbm{1}},\mu_s^{\mathbbm{2}}\),f\rangle_1.
\end{split}
\end{equation*}
Therefore, we continue to have
\beqlb\label{eq:u_t^1}
\langle u_t^{\mathbbm{1}}, f\rangle_1\ar=\ar\langle u_0^{\mathbbm{1}},{f}\rangle_1+\int_0^t\langle u_s^{\mathbbm{1}},\frac12{{f}}''\rangle_1 ds\cr
\ar\ar\quad +\  b_{\mathbbm{1}}\int_0^t\langle u_s^{\mathbbm{1}},{f}\rangle_1 ds-\int_0^t\langle\xi\(\cdot,\mu_s^{\mathbbm{2}},\mu_s^{\mathbbm{1}}\),f\rangle_1 ds+M_t^{\tilde{f}}.
\eeqlb
Similarly, one shall have
\beqlb\label{eq:u_t^2}
\langle u_t^{\mathbbm{2}}, g\rangle_1
\ar=\ar \langle u_0^{\mathbbm{2}},{g}\rangle_1 + \int_0^t\langle u_s^{\mathbbm{2}},\frac12{{g}}^{''}\rangle_1 ds
+b_{\mathbbm{2}}\int_0^t\langle u_s^{\mathbbm{2}},{g}\rangle_1 ds\cr
\ar\ar\quad +\langle\dot\chi,{g}\rangle_1\int_0^t \xi\(+\infty,\mu_s^{\mathbbm{1}},\mu_s^{\mathbbm{2}}\)
ds+\hat{M}_t^{\tilde{g}}.
\eeqlb
Let $S'\(\mathbbm{R}\)$ be the space of Schwarz distribution and define the $S'\(\mathbbm{R}\)$-valued processes $N_t$ and $\hat{N}_t$ by $N_t\(f\)=M_t^{\tilde{f}}$ and $\hat{N}_t\(g\)=\hat{M}_t^{\tilde{g}}$ for any $f,g\in C_0^{\infty}\(\mathbbm{R}\)$. Then $N_t$ and $\hat{N}_t$ are $S'\(\mathbbm{R}\)$-valued continuous square-integrable martingales with
\begin{equation*}
\begin{split}
\langle N\(f\)\rangle_t=&\,\,\langle M^{\tilde{f}}\rangle_t=
\gamma_{\mathbbm{1}}\int_0^t\int_{\mathbbm{R}}\tilde{f}^2\(y\)\mu_s^{\mathbbm{1}}\(dy\)ds\\
=&\int_0^t\int_{0}^{\infty}\(\sqrt{\gamma_{\mathbbm{1}}}\)^2\tilde{f}^2\(\(u_s^{\mathbbm{1}}\)^{-1}\(a\)\)da\,ds\\
=&\int_0^t\int_{0}^{\infty}\(\sqrt{\gamma_{\mathbbm{1}}}\int_{\mathbbm{R}}\ind_{\{a\leq u_s^{\mathbbm{1}}\(y\)\}}f\(y\)dy\)^2da\,ds
\end{split}
\end{equation*}
and
\begin{equation*}
\begin{split}
\langle \hat{N}\(g\)\rangle_t=&\,\,\langle \hat{M}^{\tilde{g}}\rangle_t=\gamma_{\mathbbm{2}}\int_0^t\int_{\mathbbm{R}}\tilde{g}^2\(y\)\mu_s^{\mathbbm{2}}\(dy\)ds\\
=&\,\,\int_0^t\int_{0}^{\infty}\(\sqrt{\gamma_{\mathbbm{2}}}\)^2\tilde{g}^2\(\(u_s^{\mathbbm{2}}\)^{-1}\(a\)\)da\,ds\\
=&\,\,\int_0^t\int_{0}^{\infty}\(\sqrt{\gamma_{\mathbbm{2}}}\int_{\mathbbm{R}}\ind_{\{a\leq u_s^{\mathbbm{2}}\(y\)\}}g\(y\)dy\)^2da\,ds.
\end{split}
\end{equation*}
Moreover, one can see that
$\langle{N}\(f\),\hat{N}\(g\)\rangle_t=\langle{M}^{\tilde{f}},\hat{M}^{\tilde{g}}\rangle_t=0.$
By Theorem~III-7 and Corollary~III-8 in \cite{EM90}, on some extension of the probability space, one can define
two independent Gaussian white noises $W^{\mathbbm{i}}(d s\, d a)$, $\mathbbm{i}=\mathbbm{1,2}$ on $\mathbbm{R}_{+}\times\mathbbm{R}$ with intensity $ds\,da$ such that
 $$N_t\(f\)=\int_0^t\int_0^{\infty}\int_{\mathbbm{R}}\sqrt{\gamma_{\mathbbm{1}}}\ind_{\{a\leq u_s^{\mathbbm{1}}\(x\)\}}f\(x\)dxW^{\mathbbm{1}}\(ds\,da\)$$
and
$$\hat{N}_t\(g\)=\int_0^t\int_0^{\infty}\int_{\mathbbm{R}}\sqrt{\gamma_{\mathbbm{2}}}\ind_{\{a\leq u_s^{\mathbbm{2}}\(x\)\}}g\(x\)dxW^{\mathbbm{2}}\(ds\,da\).$$
Plugging back to (\ref{eq:u_t^1}) and (\ref{eq:u_t^2}), one can see that $\(u^{\mathbbm{1}}_t,u^{\mathbbm{2}}_t\)_{t\geq0}$ is a solution to (\ref{eq:spdes}).
\qed

\subsection{Uniqueness for SPDEs}
This subsection is devoted to prove the pathwise uniqueness for the solution to the system of SPDEs (\ref{eq:spdes}). By Propositions \ref{prop11} and \ref{prop2}, the uniqueness for the solution to MP (\ref{mp1}, \ref{mp2}) is then a direct consequence. We apply the approach of an extended Yamada-Watanabe argument to smooth function. This is an adaptation to that of Proposition 3.1 in \cite{MX2015}.

Before we go deep into the uniqueness theorem, let's introduce some notation.  Let $\Phi\in C_0^{\infty}\(\mathbbm{R}\)^{+}$  such that supp$\(\Phi\)\subset\(-1,1\)$
and the total integral is $1$. Define $\Phi_m\(x\)=m\Phi\(mx\)$. Notice that $\lim_{m \rightarrow \infty}\Phi_m(x)= \delta_0(x),$ which is the Dirac-$\delta$ function on $\mathbbm{R}.$
Let $\{a_k\}$ be a decreasing sequence defined recursively by $a_0=1$ and $\int_{a_k}^{a_{k-1}}z^{-1}dz=k$ for $k\geq 1.$ Let $\psi_k$ be non-negative functions in $C_0^{\infty}\(\mathbbm{R}\)$ such that supp$\(\psi_k\)\subset\(a_k,a_{k-1}\)$ and $\int_{a_k}^{a_{k-1}}\psi_k\(z\)dz=1$ and $\psi_k\(z\)\leq 2\(kz\)^{-1}$ for all $z\in\mathbbm{R}.$
Denote by
$$\phi_k\(z\)=\int_0^{|z|}dy\int_0^{y}\psi_k\(x\)dx,\,\,\,\,\forall\ z\in\mathbbm{R}.$$
Then, $\phi_k\(z\)\uparrow |z|$, $|\phi_k'\(z\)|\leq 1$ and $|z|\phi_k''\(z\)\leq 2k^{-1}.$
Let
$$J\(x\)=\int_{\mathbbm{R}}e^{-\left|x\right|}\varrho\(x-y\)dy,$$
where $\varrho$ is the mollifier given by
$\varrho\(x\)=C\exp\{-1/\(1-x^2\)\}\ind_{\{\left|x\right|<1\}},$
and $C$ is a constant such that $\int_{\mathbbm{R}}\varrho\(x\)dx=1$. Then, for any $m\in{\mathbbm{Z}_{+}}$, there
are positive constants $c_m$ and $C_m$ such that
\begin{equation}\label{eq:J}
c_me^{-\left|x\right|}\leq \left|J^{\(m\)}\(x\)\right|\leq C_me^{-\left|x\right|},\,\,\,\,\forall\ x\in\mathbbm{R}.
\end{equation}

Assume that $\(u_t^{\mathbbm{1}},u_t^{\mathbbm{2}}\)_{t\geq 0}$ and $\left({\tilde{u}}_t^{\mathbbm{1}},{\tilde{u}}_t^{\mathbbm{2}}\right)_{t\geq 0}$
are two solutions to the system of SPDEs (\ref{eq:spdes}) with the same initial values. $(\mu_t^{\mathbbm{1}}, \mu_t^{\mathbbm{2}})_{t \ge 0}$ and $({\tilde{\mu}}_t^{\mathbbm{1}},{\tilde{\mu}}_t^{\mathbbm{2}})_{t \ge 0}$ stand for their corresponding measure-valued processes,
namely, $u^{\mathbbm{i}}_t\(y\)=\mu^{\mathbbm{i}}_t(-\infty, y]$ and $\tilde{u}^{\mathbbm{i}}_t\(y\)=\tilde{\mu}^{\mathbbm{i}}_t(-\infty, y]$
for $\mathbbm{i}=\mathbbm{1,2}$. Let $v_t^{\mathbbm{i}}(y) =u_t^{\mathbbm{i}}(y)-{\tilde{u}}_t^{\mathbbm{i}}(y)$ and
$\bar{G}_s^{\mathbbm{i}}\(a,y\)=\ind_{\{a\leq u_s^{\mathbbm{i}}\(y\)\}} - \ind_{\{a\leq {\tilde{u}}_s^{\mathbbm{i}}\(y\)\}}$.
Moreover, we denote
\begin{equation}\label{I124}
\begin{split}
&I_1^{m,k,\mathbbm{i}}=\frac{1}{2}\mathbbm{E}\left[\int_0^t\int_{\mathbbm{R}}\phi_k^{'}\(\< v_s^{\mathbbm{i}},\Phi_m\(x-\cdot\)\>\)
\< v_s^{\mathbbm{i}},\Delta_{y}\Phi_m\(x-\cdot\)\> J\(x\)dxds\right];\\
&I_2^{m,k,\mathbbm{i}}=\mathbbm{E}\left[\int_0^t\int_{\mathbbm{R}}\phi_k^{'}\(\< v_s^{\mathbbm{i}},\Phi_m\(x-\cdot\)\>\)
\< v_s^{\mathbbm{i}},\Phi_m\(x-\cdot\)\> J\(x\)dxds\right];\\
&{I}_3^{m,k,\mathbbm{i}}=\mathbbm{E}\left[\int_0^t\int_{\mathbbm{R}}\int_{\mathbbm{R}^{+}}\phi_k''\(\< v_s^{\mathbbm{i}},\Phi_m\(x-\cdot\)\>\)
\left|\int_{\mathbbm{R}}\bar{G}_s^{\mathbbm{i}}\(a,y\)\Phi_m\(x-y\)dy\right|^2 daJ\(x\)dxds\right].
\end{split}
\end{equation}

\begin{proposition}\label{prop1}
For $\mathbbm{i}=\mathbbm{1,2}$ we have
\begin{equation*}
\begin{split}
\mathbbm{E}\left[\int_{\mathbbm{R}}\phi_k\(\<v_t^{\mathbbm{i}},\Phi_m\(x-\cdot\)\>\)J\(x\)dx\right]=
I_{1}^{m,k,\mathbbm{i}}+b_\mathbbm{i}I_{2}^{m,k,\mathbbm{i}}+\frac{\gamma_\mathbbm{i}}{2}I_3^{m,k,\mathbbm{i}}+I_{4}^{m,k,\mathbbm{i}},
\end{split}
\end{equation*}
where $I_{1}^{m,k,\mathbbm{i}}, I_{2}^{m,k,\mathbbm{i}}$ and $I_3^{m,k,\mathbbm{i}}$ are given by \eqref{I124},
\begin{equation}\label{I_31}
{{I}}_4^{m,k,\mathbbm{1}}=-\mathbbm{E}\left[\int_0^t\int_{\mathbbm{R}}\int_{\mathbbm{R}}\phi^{'}_k\(\< v_s^{\mathbbm{1}},\Phi_m\(x-\cdot\)\>\)\Phi_m\(x-y\) \overline{\xi}_s(y) dyJ\(x\)dxds\right]
\end{equation}
and
\begin{equation}\label{I_32}
{{I}}_4^{m,k,\mathbbm{2}}=\mathbbm{E}\left[\int_0^t\int_{\mathbbm{R}}\phi_k'\(\< v_s^{\mathbbm{2}},\Phi_m\(x-\cdot\)\>\)\<\dot{\chi},\Phi_m\(x-\cdot\)\>
\overline{\xi}_s(\infty) J\(x\) dxds\right]
\end{equation}
with $\overline{\xi}_s(\cdot) = \xi\(\cdot,\mu_s^{\mathbbm{1}},\mu_s^{\mathbbm{2}}\)
-\xi\(\cdot,\tilde{\mu}_s^{\mathbbm{1}},\tilde{\mu}_s^{\mathbbm{2}}\).$
\end{proposition}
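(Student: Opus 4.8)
The plan is to obtain the identity as the expectation of an Itô expansion of the Yamada--Watanabe approximant $\phi_k$ evaluated at the spatially mollified difference $\langle v_t^{\mathbbm{i}},\Phi_m(x-\cdot)\rangle$, following the scheme of Proposition~3.1 in \cite{MX2015}. For each fixed $x$ this quantity is a real-valued continuous semimartingale, so Itô's formula produces a drift term that splits into the Laplacian, linear, and migration contributions (each carrying a factor $\phi_k'$), a second-order term governed by the quadratic variation (carrying $\phi_k''$), and a stochastic integral. After weighting by $J(x)$, integrating in $x$, and taking expectations, the stochastic integral has zero mean and the four surviving pieces are exactly $I_1^{m,k,\mathbbm{i}}$, $b_{\mathbbm{i}}I_2^{m,k,\mathbbm{i}}$, $\tfrac{\gamma_{\mathbbm{i}}}{2}I_3^{m,k,\mathbbm{i}}$ and $I_4^{m,k,\mathbbm{i}}$.

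First I would subtract the two copies of the weak formulation \eqref{eq:fgspdes}, which have the same initial data so that the $t=0$ terms cancel and $v_0^{\mathbbm{i}}=0$, and test against $\Phi_m(x-\cdot)\in C_0^\infty(\mathbbm{R})$. Moving $\Delta/2$ onto the (smooth) mollifier by integration by parts, for $\mathbbm{i}=\mathbbm{1}$ this gives, for each fixed $x$,
\begin{align*}
\langle v_t^{\mathbbm{1}},\Phi_m(x-\cdot)\rangle
&= M_t^{x}
+ \int_0^t \tfrac12\langle v_s^{\mathbbm{1}},\Delta_{y}\Phi_m(x-\cdot)\rangle\,ds \\
&\quad + b_{\mathbbm{1}}\int_0^t\langle v_s^{\mathbbm{1}},\Phi_m(x-\cdot)\rangle\,ds
- \int_0^t\!\!\int_{\mathbbm{R}}\Phi_m(x-y)\,\overline{\xi}_s(y)\,dy\,ds,
\end{align*}
where the martingale part is
\begin{equation*}
M_t^{x}=\sqrt{\gamma_{\mathbbm{1}}}\int_0^t\!\!\int_{\mathbbm{R}^+}\Big(\int_{\mathbbm{R}}\bar{G}_s^{\mathbbm{1}}(a,y)\,\Phi_m(x-y)\,dy\Big)\,W^{\mathbbm{1}}(ds\,da),
\end{equation*}
with quadratic variation $\langle M^{x}\rangle_t=\gamma_{\mathbbm{1}}\int_0^t\int_{\mathbbm{R}^+}\big|\int_{\mathbbm{R}}\bar{G}_s^{\mathbbm{1}}(a,y)\Phi_m(x-y)\,dy\big|^2\,da\,ds$, since $W^{\mathbbm{1}}$ has intensity $ds\,da$. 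The identity for $\mathbbm{i}=\mathbbm{2}$ is the same except that the last drift is replaced by the immigration contribution $+\langle\dot\chi,\Phi_m(x-\cdot)\rangle\int_0^t\overline{\xi}_s(\infty)\,ds$.

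Next I would apply the one-dimensional Itô formula to $\phi_k(\langle v_t^{\mathbbm{i}},\Phi_m(x-\cdot)\rangle)$, using $\phi_k(0)=0$. The first-order term decomposes along the finite-variation part above into the integrands of $I_1$, $I_2$ and $I_4$ (all multiplied by $\phi_k'$), while the second-order term $\tfrac12\phi_k''\,d\langle M^{x}\rangle$ yields precisely $\tfrac{\gamma_{\mathbbm{i}}}{2}\phi_k''(\cdots)\int_{\mathbbm{R}^+}|\int_{\mathbbm{R}}\bar{G}_s^{\mathbbm{i}}(a,y)\Phi_m(x-y)\,dy|^2\,da\,ds$, the integrand of $I_3$. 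Multiplying by $J(x)$, integrating over $x\in\mathbbm{R}$, and taking expectation, the term $\int_0^t\phi_k'(\cdots)\,dM_s^x$ drops out; matching what remains against \eqref{I124}, \eqref{I_31} and \eqref{I_32} gives the stated formula.

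The hard part will be the integrability bookkeeping that legitimizes these steps: one must verify that $M_t^x$ is a true (not merely local) martingale and that Fubini's theorem applies across the $x$-integration against $J$, the time integrals, the stochastic integration and the expectation. This is exactly where the exponential weight $J(x)\asymp e^{-|x|}$ from \eqref{eq:J} and the bound $|\phi_k'|\le 1$ enter. Since $v_s^{\mathbbm{i}}$ is a difference of distribution functions of finite measures, the mollification keeps $\langle v_s^{\mathbbm{i}},\Phi_m(x-\cdot)\rangle$ bounded, and the elementary identity $\int_{\mathbbm{R}^+}\bar{G}_s^{\mathbbm{i}}(a,y)^2\,da=|v_s^{\mathbbm{i}}(y)|$ together with finiteness of the total mass controls $\langle M^x\rangle_t$ and yields the $L^1(J\,dx)$ integrability needed to interchange all the operations; a routine localization in the total mass removes any residual unboundedness and is then released before the later passage to the limit in $m$ and $k$.
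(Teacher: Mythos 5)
Your proposal is correct and follows essentially the same route as the paper: difference the two solutions, pair with the mollifier $\Phi_m(x-\cdot)$, apply the one-dimensional It\^{o} formula to $\phi_k$ of the resulting real semimartingale, and then integrate against $J(x)\,dx$ and take expectations so that the stochastic integral vanishes and the four drift/quadratic-variation pieces match $I_1^{m,k,\mathbbm{i}}$, $b_{\mathbbm{i}}I_2^{m,k,\mathbbm{i}}$, $\tfrac{\gamma_{\mathbbm{i}}}{2}I_3^{m,k,\mathbbm{i}}$ and $I_4^{m,k,\mathbbm{i}}$. The integrability discussion you add at the end is sensible housekeeping that the paper leaves implicit, but it does not change the argument.
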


\proof
It follows from (\ref{eq:spdes}) that
\beqnn
v_t^{\mathbbm{1}}\(y\)\ar=\ar\sqrt{\gamma_{\mathbbm{1}}}\int_0^t\int_0^{\infty}\bar{G}_s^{\mathbbm{1}}\(a,y\)W^{\mathbbm{1}}\(ds\,da\)
+\int_0^t\(\frac{\Delta_{y}}{2}v_s^{\mathbbm{1}}\(y\)+b_{\mathbbm{1}}v_s^{\mathbbm{1}}\(y\)-\overline{\xi}_s(y) \)ds
\eeqnn
and
\beqnn
v_t^{\mathbbm{2}}\(y\)\ar=\ar\sqrt{\gamma_{\mathbbm{2}}}\int_0^t\int_0^{\infty}\bar{G}_s^{\mathbbm{2}}\(a,y\)W^{\mathbbm{2}}\(ds\,da\)
+\int_0^t\(\frac{\Delta_{y}}{2}v_s^{\mathbbm{2}}\(y\)+b_{\mathbbm{2}}v_s^{\mathbbm{2}}\(y\)+\dot{\chi}\(y\)\overline{\xi}_s(\infty)\)ds.
\eeqnn
Consequently, we have
\beqlb\label{v_1}
\langle v_t^{\mathbbm{1}},\Phi_m\(x-\cdot\)\rangle
\ar=\ar \sqrt{\gamma_{\mathbbm{1}}}\int_0^t\int_{\mathbbm{R}^{+}}\int_{\mathbbm{R}}\bar{G}_s^{\mathbbm{1}}\(a,y\)\Phi_m\(x-y\)dyW^{\mathbbm{1}}\(ds\,da\)\cr
\ar\ar + b_{\mathbbm{1}}\int_0^t\big\langle v_s^{\mathbbm{1}},\Phi_m\(x-\cdot\)\big\rangle ds -\int_0^t\int_{\mathbbm{R}}\Phi_m\(x-y\)\overline{\xi}_s(y)dyds\cr
\ar\ar + \frac{1}{2}\int_0^t\< v_s^{\mathbbm{1}},\Delta_{y}\Phi_m\(x-\cdot\)\> d s
\eeqlb
and
\beqlb\label{v_2}
\langle v_t^{\mathbbm{2}},\Phi_m\(x-\cdot\)\rangle\ar=\ar \sqrt{\gamma_{\mathbbm{2}}}\int_0^t\int_{\mathbbm{R}^{+}}
\int_{\mathbbm{R}}\bar{G}_s^{\mathbbm{2}}\(a,y\)\Phi_m\(x-y\)dyW^{\mathbbm{2}}\(ds\,da\) \cr
\ar\ar +\frac{1}{2}\int_0^t \< v_s^{\mathbbm{2}},\Delta_{y}\Phi_m\(x-\cdot\)\> ds+b_{\mathbbm{2}}\int_0^t\big\langle v_s^{\mathbbm{2}},\Phi_m\(x-\cdot\)\big\rangle ds\cr
\ar\ar +\int_0^t\big\langle\dot{\chi},\Phi_m\(x-\cdot\)\big\rangle\overline{\xi}_s(\infty) ds.
\eeqlb
 Applying It\^o's formula to (\ref{v_1}) and (\ref{v_2}), we can easily get
\beqnn
\ar\ar\phi_k\(\langle v_t^{\mathbbm{1}},\Phi_m\(x-\cdot\)\rangle\)\cr
\ar\ar\qquad=\,\,\sqrt{\gamma_{\mathbbm{1}}}\int_0^t\int_{\mathbbm{R}^{+}}\phi_k'\(\langle v_s^{\mathbbm{1}},\Phi_m\(x-\cdot\)\rangle\)\int_{\mathbbm{R}}\bar{G}_s^{\mathbbm{1}}\(a,y\)\Phi_m\(x-y\)dyW^{\mathbbm{1}}\(ds\,da\)\\
\ar\ar\qquad\qquad +\int_0^t\phi_k'\(\langle v_s^{\mathbbm{1}},\Phi_m\(x-\cdot\)\rangle\)\Big[{\frac12\< v_s^{\mathbbm{1}}, \Delta_{y}\Phi_m\(x-\cdot\)\> } +b_{\mathbbm{1}} \big\langle v_s^{\mathbbm{1}},\Phi_m\(x-\cdot\)\big\rangle\Big] ds\\
\ar\ar\qquad\qquad+\frac{\gamma_{\mathbbm{1}}}{2}\int_0^t\int_{\mathbbm{R}^{+}}\phi''_k\(\langle v_s^{\mathbbm{1}},\Phi_m\(x-\cdot\)\rangle\)
\left|\int_{\mathbbm{R}}\bar{G}_s^{\mathbbm{1}}\(a,y\)\Phi_m\(x-y\)dy\right|^2dads\\
\ar\ar\qquad\qquad-\int_0^t\int_{\mathbbm{R}}\phi'_k\(\langle v_s^{\mathbbm{1}},\Phi_m\(x-\cdot\)\rangle\)\Phi_m\(x-y\) \overline{\xi}_s(y) dyds.
\eeqnn
and
\beqnn
\ar\ar\phi_k\(\langle v_t^{\mathbbm{2}},\Phi_m\(x-\cdot\)\rangle\)\cr
\ar\ar\qquad=\sqrt{\gamma_{\mathbbm{2}}}\int_0^t\int_{\mathbbm{R}^{+}}\phi_k'\(\langle v_s^{\mathbbm{2}},\Phi_m\(x-\cdot\)\rangle\)\int_{\mathbbm{R}}\bar{G}_s^{\mathbbm{2}}\(a,y\)\Phi_m\(x-y\)dyW^{\mathbbm{2}}\(ds\,da\)\\
\ar\ar\qquad\qquad+\int_0^t\phi_k'\(\langle v_s^{\mathbbm{2}},\Phi_m\(x-\cdot\)\rangle\)\Big[{ \frac12\< v_s^{\mathbbm{2}}, \Delta_{y}\Phi_m\(x-\cdot\)\>} +b_{\mathbbm{2}}\big\langle v_s^{\mathbbm{2}},\Phi_m\(x-\cdot\)\big\rangle\Big] ds\\
\ar\ar\qquad\qquad+\frac{\gamma_{\mathbbm{2}}}{2}\int_0^t\int_{\mathbbm{R}^{+}}\phi''_k\(\langle v_s^{\mathbbm{2}},\Phi_m\(x-\cdot\)\rangle\)
\left|\int_{\mathbbm{R}}\bar{G}_s^{\mathbbm{2}}\(a,y\)\Phi_m\(x-y\)dy\right|^2dads\\
\ar\ar\qquad\qquad+\int_0^t\phi_k'\(\langle v_s^{\mathbbm{2}},\Phi_m\(x-\cdot\)\rangle\)\langle\dot{\chi},\Phi_m\(x-\cdot\)\rangle
\overline{\xi}_s(\infty) ds.
\eeqnn
Taking the expectations of $\big\langle\phi_k\(\langle v_t^{\mathbbm{i}},\Phi_m\(x-\cdot\)\rangle\),\,J\(x\)\big\rangle_1$
with $\mathbbm{i}=\mathbbm{1,2}$, we obtain the desired results.
\qed

\begin{lemma}\label{l4.2}
For $\mathbbm{i}=\mathbbm{1,2}$ we have
\beqnn
2I_1^{m,k,\mathbbm{i}}\leq \mathbbm{E}\left[\int_0^t\int_{\mathbbm{R}}\< |v_s^{\mathbbm{i}}|,\Phi_m\(x-\cdot\)\> |J^{''}\(x\)|dxds\right].
\eeqnn
\end{lemma}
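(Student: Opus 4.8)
The plan is to recognize $2I_1^{m,k,\mathbbm{i}}$ as the space-time expectation of $\phi_k'(F_s)\,\partial_x^2 F_s$, where $F_s(x):=\< v_s^{\mathbbm{i}},\Phi_m(x-\cdot)\>$, and then to combine the convexity of $\phi_k$ with a double integration by parts against the weight $J$. The argument is identical for $\mathbbm{i}=\mathbbm{1}$ and $\mathbbm{i}=\mathbbm{2}$, since $I_1^{m,k,\mathbbm{i}}$ has the same form in both cases.

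First I would observe that $\Phi_m(x-y)$ depends on $x$ and $y$ only through $x-y$, so $\Delta_y\Phi_m(x-\cdot)=\partial_x^2\Phi_m(x-\cdot)$ and hence $\< v_s^{\mathbbm{i}},\Delta_y\Phi_m(x-\cdot)\>=\partial_x^2 F_s(x)$. Because $\Phi_m\in C_0^\infty(\mathbbm{R})$ and $v_s^{\mathbbm{i}}=u_s^{\mathbbm{i}}-\tilde u_s^{\mathbbm{i}}$ is a bounded function (a difference of two bounded distribution functions), $F_s$ is a bounded $C^\infty$ function of $x$ with bounded derivatives of every order, which legitimizes the manipulations below.

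Next comes the key analytic step. By the chain rule,
$$\partial_x^2\big(\phi_k(F_s(x))\big)=\phi_k''(F_s(x))\,\big(\partial_x F_s(x)\big)^2+\phi_k'(F_s(x))\,\partial_x^2 F_s(x).$$
Since $\phi_k$ is convex (indeed $\phi_k''=\psi_k\ge 0$, as follows from the definition $\phi_k(z)=\int_0^{|z|}dy\int_0^y\psi_k(x)\,dx$ with $\psi_k\ge 0$), the first term on the right is nonnegative, so $\phi_k'(F_s)\,\partial_x^2 F_s\le \partial_x^2\big(\phi_k(F_s)\big)$. Substituting into the definition of $I_1^{m,k,\mathbbm{i}}$ and multiplying by two yields
$$2I_1^{m,k,\mathbbm{i}}\le \mathbbm{E}\Big[\int_0^t\int_{\mathbbm{R}}\partial_x^2\big(\phi_k(F_s(x))\big)\,J(x)\,dx\,ds\Big].$$
I would then integrate by parts twice in $x$ to move both derivatives onto $J$, obtaining $\int_{\mathbbm{R}}\phi_k(F_s(x))\,J''(x)\,dx$. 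Finally, $\phi_k\ge 0$ together with $\phi_k(z)\le|z|$ and $\Phi_m\ge 0$ give $\phi_k(F_s(x))\le |F_s(x)|=|\< v_s^{\mathbbm{i}},\Phi_m(x-\cdot)\>|\le \< |v_s^{\mathbbm{i}}|,\Phi_m(x-\cdot)\>$, and $\phi_k(F_s)\,J''\le \phi_k(F_s)\,|J''|$; combining these bounds delivers exactly the claimed inequality.

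The main obstacle is the rigorous justification of the double integration by parts, namely the vanishing of the boundary terms at $\pm\infty$ and the integrability needed to exchange expectation and integration. Both rest on the exponential decay of $J$, $J'$ and $J''$ from the estimate (\ref{eq:J}), combined with the boundedness of $\phi_k(F_s)$ and $\partial_x F_s$ inherited from the smoothness and compact support of $\Phi_m$ and the boundedness of $v_s^{\mathbbm{i}}$. Once these decay-versus-boundedness estimates are in place, the computation is routine.
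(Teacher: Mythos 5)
Your argument is correct and is essentially the paper's own proof: both drop the same nonnegative term $\phi_k''(F_s)(\partial_x F_s)^2 J\ge 0$ (you do it via the pointwise convexity bound $\phi_k'(F_s)\partial_x^2F_s\le\partial_x^2(\phi_k(F_s))$ before integrating by parts, the paper after one integration by parts), then move both derivatives onto $J$ and finish with $\phi_k(z)\le|z|$ and $\langle v,\Phi_m\rangle\le\langle|v|,\Phi_m\rangle$. The only cosmetic difference is the order of the convexity step and the integrations by parts.
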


\proof
Note that
\beqlb\label{eq:I_1}
2I_1^{m,k,\mathbbm{i}}\ar=\ar\mathbbm{E}\[\int_0^t\int_{\mathbbm{R}}\phi_k'\(\< v_s^{\mathbbm{i}},\Phi_m\(x-\cdot\)\>\)\< v_s^{\mathbbm{i}},\Delta_{y}\Phi_m\(x-\cdot\)\> J\(x\)dx ds\]\cr
\ar=\ar\mathbbm{E}\[\int_0^t\int_{\mathbbm{R}}\phi_k'\(\< v_s^{\mathbbm{i}},\Phi_m\(x-\cdot\)\>\)\Delta_{x}\< v_s^{\mathbbm{i}},\Phi_m\(x-\cdot\)\> J\(x\)dx ds\]\cr
\ar=\ar-\mathbbm{E}\[\int_0^t\int_{\mathbbm{R}}\(\frac{\partial}{\partial x}
\< v_s^{\mathbbm{i}},\Phi_m\(x-\cdot\)\>\)^2\phi_k''\(\< v_s^{\mathbbm{i}},\Phi_m\(x-\cdot\)\>\) J\(x\)dxds\]\cr
\ar\ar-\mathbbm{E}\[\int_0^t\int_{\mathbbm{R}}\frac{\partial}{\partial x}\< v_s^{\mathbbm{i}},\Phi_m\(x-\cdot\)\> \phi_k'\(\< v_s^{\mathbbm{i}},\Phi_m\(x-\cdot\)\>\) J'\(x\)dxds\]\cr
\ar\leq\ar-\mathbbm{E}\[\int_0^t\int_{\mathbbm{R}}\frac{\partial}{\partial x}\< v_s^{\mathbbm{i}},\Phi_m\(x-\cdot\)\> \phi_k'\(\< v_s^{\mathbbm{i}},\Phi_m\(x-\cdot\)\>\) J'\(x\)dxds\]\cr
\ar=\ar-\mathbbm{E}\[\int_0^t\int_{\mathbbm{R}}\frac{\partial}{\partial x}\( \phi_k\(\<v_s^{\mathbbm{i}},\Phi_m\(x-\cdot\)\>\)\)
J'\(x\)dxds\]\cr
\ar=\ar\mathbbm{E}\[\int_0^t\int_{\mathbbm{R}}\phi_k\(\< v_s^{\mathbbm{i}},\Phi_m\(x-\cdot\)\>\)J''\(x\)dx ds\].
\eeqlb
Use $\phi_k\(z\)\leq |z|$ to get
$$\phi_k\(\< v_s^{\mathbbm{i}},\Phi_m\(x-\cdot\)\>\)\leq \Big|\< v_s^{\mathbbm{i}},\Phi_m\(x-\cdot\)\>\Big|\leq \< |v_s^{\mathbbm{i}}|,\Phi_m\(x-\cdot\)\>.$$
That implies the result.
\qed

\begin{lemma}\label{l4.3}
For $\mathbbm{i}=\mathbbm{1,2}$ we have
\beqnn
I_3^{m,k,\mathbbm{i}} \le 4\mathbbm{E}\[\int_0^t\int_{\mathbbm{R}}\phi_k^{''}\(\< v_s^{\mathbbm{i}},\Phi_m\(x-\cdot\)\>\)\<
|v_s^{\mathbbm{i}}|,\Phi_m\(x-\cdot\)\>J\(x\)dxds\].
\eeqnn
\end{lemma}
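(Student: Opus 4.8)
The plan is to reduce the lemma to a single pointwise (deterministic) $L^2$-estimate in the variable $a$, and then re-insert it under the expectation using non-negativity. Since $\psi_k\geq 0$ we have $\phi_k''\(z\)=\psi_k\(|z|\)\geq 0$, and $J\(x\)>0$; moreover the factor $\phi_k''\(\< v_s^{\mathbbm{i}},\Phi_m\(x-\cdot\)\>\)$ depends only on $\(s,x\)$ and not on $a$. Hence in the definition \eqref{I124} of $I_3^{m,k,\mathbbm{i}}$ I can pull this factor out of the $\int_{\mathbbm{R}^{+}}da$ integral. It therefore suffices to establish the pointwise bound
\[
\int_{\mathbbm{R}^{+}}\left|\int_{\mathbbm{R}}\bar{G}_s^{\mathbbm{i}}\(a,y\)\Phi_m\(x-y\)dy\right|^2 da\ \leq\ 4\< |v_s^{\mathbbm{i}}|,\Phi_m\(x-\cdot\)\>,
\]
after which I integrate against the non-negative weight $\phi_k''\(\< v_s^{\mathbbm{i}},\Phi_m\(x-\cdot\)\>\)J\(x\)\,dx\,ds$ and take expectations, every inequality being preserved by non-negativity of all factors.

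For the pointwise bound the key structural fact is that, as a function of $a$, $\bar{G}_s^{\mathbbm{i}}\(a,y\)=\ind_{\{a\leq u_s^{\mathbbm{i}}\(y\)\}}-\ind_{\{a\leq {\tilde{u}}_s^{\mathbbm{i}}\(y\)\}}$ is the signed indicator of the interval lying between $u_s^{\mathbbm{i}}\(y\)$ and ${\tilde{u}}_s^{\mathbbm{i}}\(y\)$; since both endpoints are non-negative, $|\bar{G}_s^{\mathbbm{i}}\(a,y\)|\in\{0,1\}$ and $\int_{\mathbbm{R}^{+}}|\bar{G}_s^{\mathbbm{i}}\(a,y\)|\,da=|u_s^{\mathbbm{i}}\(y\)-{\tilde{u}}_s^{\mathbbm{i}}\(y\)|=|v_s^{\mathbbm{i}}\(y\)|$. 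Using $\Phi_m\geq 0$ to pass to $|\bar{G}_s^{\mathbbm{i}}|$ and then Cauchy--Schwarz against the probability measure $\Phi_m\(x-y\)dy$ (recall $\int_{\mathbbm{R}}\Phi_m=1$), together with $|\bar{G}_s^{\mathbbm{i}}|^2=|\bar{G}_s^{\mathbbm{i}}|$, I get for each fixed $a$
\[
\left|\int_{\mathbbm{R}}\bar{G}_s^{\mathbbm{i}}\(a,y\)\Phi_m\(x-y\)dy\right|^2\leq\left(\int_{\mathbbm{R}}|\bar{G}_s^{\mathbbm{i}}\(a,y\)|\Phi_m\(x-y\)dy\right)^2\leq\int_{\mathbbm{R}}|\bar{G}_s^{\mathbbm{i}}\(a,y\)|\Phi_m\(x-y\)dy.
\]
Integrating in $a$ and applying Tonelli's theorem (all integrands non-negative) yields $\int_{\mathbbm{R}}|v_s^{\mathbbm{i}}\(y\)|\Phi_m\(x-y\)dy=\< |v_s^{\mathbbm{i}}|,\Phi_m\(x-\cdot\)\>$, which is in fact sharper than the stated constant.

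I do not expect a genuine obstacle here: this is a short deterministic estimate, and the only points requiring care are the sign bookkeeping in $\bar{G}_s^{\mathbbm{i}}$ (handled by $\Phi_m\geq 0$) and the repeated use of non-negativity to justify pulling $\phi_k''$ through the $a$-integral and to preserve the inequality after taking expectations. The generous constant $4$ simply leaves room for a cruder route, in which one expands the square as a double integral in $\(y,y'\)$, bounds the kernel $\int_{\mathbbm{R}^{+}}\bar{G}_s^{\mathbbm{i}}\(a,y\)\bar{G}_s^{\mathbbm{i}}\(a,y'\)da$ in absolute value by the intersection length $\min\(|v_s^{\mathbbm{i}}\(y\)|,|v_s^{\mathbbm{i}}\(y'\)|\)\leq\tfrac12\(|v_s^{\mathbbm{i}}\(y\)|+|v_s^{\mathbbm{i}}\(y'\)|\)$, and uses symmetry and $\int_{\mathbbm{R}}\Phi_m=1$; either argument closes the estimate comfortably within the factor $4$.
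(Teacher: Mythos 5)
Your proof is correct and follows essentially the same route as the paper: the paper likewise applies Jensen/Cauchy--Schwarz with respect to the probability measure $\Phi_m(x-y)\,dy$ to move the square inside, uses $(\bar{G}_s^{\mathbbm{i}})^2=|\bar{G}_s^{\mathbbm{i}}|$ and $\int_{\mathbbm{R}^+}|\bar{G}_s^{\mathbbm{i}}(a,y)|\,da=|v_s^{\mathbbm{i}}(y)|$, and keeps the (unneeded) constant $4$. Your observation that the estimate actually holds with constant $1$ is accurate; the extra factor in the statement is just slack.
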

\proof
It is easy to see that
\beqnn
I_3^{m,k,\mathbbm{i}}\ar\leq\ar\mathbbm{E}\[\int_0^t\int_{\mathbbm{R}}\int_{\mathbbm{R}^{+}}\phi_k^{''}\(\< v_s^{\mathbbm{i}},\Phi_m\(x-\cdot\)\>\)
\int_{\mathbbm{R}}\(\bar{G}_s^{\mathbbm{i}}\(a,y\)\)^2\Phi_m\(x-y\)dy daJ\(x\)dxds\]\cr
\ar\leq\ar 4\mathbbm{E}\[\int_0^t\int_{\mathbbm{R}}\phi_k^{''}\(\< v_s^{\mathbbm{i}},\Phi_m\(x-\cdot\)\>\)\int_{\mathbbm{R}}
\left|v_s^{\mathbbm{i}}\(y\)\right|\Phi_m\(x-y\)dyJ\(x\)dxds\].
\eeqnn
The result follows.
\qed

\begin{theorem}\label{th:uniqueness}
Assume that there exists a constant $K$ such that
$$\big|{\xi}\(x,\nu_1,\nu_2\)-{\xi}\(x,\tilde{\nu}_1,\tilde{\nu}_2\)\big|\leq K\big[\rho\(\nu_1,\tilde{\nu}_1\)+\rho\(\nu_2,\tilde{\nu}_2\)\big]$$
for any $x\in\mathbbm{R}\cup\{\pm\infty\}$ and $\nu_i,\tilde{\nu}_i \in M_{F}\(\mathbbm{R}\)$ with $i = 1, 2$. Then the pathwise uniqueness holds for SPDEs (\ref{eq:spdes}), namely, if (\ref{eq:spdes}) has two solutions defined on the same stochastic basis with  the same initial values, then the solutions coincide almost surely.
\end{theorem}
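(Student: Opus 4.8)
The plan is to run an extended Yamada--Watanabe argument on the $J$-weighted $L^1$-distance between the two solutions, closing a \emph{joint} Gronwall inequality for the two colonies at once. Writing $v_t^{\mathbbm{i}}=u_t^{\mathbbm{i}}-\tilde u_t^{\mathbbm{i}}$, I would start from the identity in Proposition~\ref{prop1}, which expresses $\mathbbm{E}\big[\int_{\mathbbm{R}}\phi_k(\langle v_t^{\mathbbm{i}},\Phi_m(x-\cdot)\rangle)J(x)\,dx\big]$ as $I_1^{m,k,\mathbbm{i}}+b_{\mathbbm{i}}I_2^{m,k,\mathbbm{i}}+\tfrac{\gamma_{\mathbbm{i}}}{2}I_3^{m,k,\mathbbm{i}}+I_4^{m,k,\mathbbm{i}}$. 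The strategy is to bound each of the four terms from above, send $m\to\infty$ and then $k\to\infty$, and observe that the left-hand side tends to $\mathbbm{E}\langle|v_t^{\mathbbm{i}}|,J\rangle_1$: as $m\to\infty$ we have $\langle v_t^{\mathbbm{i}},\Phi_m(x-\cdot)\rangle\to v_t^{\mathbbm{i}}(x)$ pointwise (the solutions lie in $C_{b,m}(\mathbbm{R})$, hence $v_t^{\mathbbm{i}}$ is continuous), and as $k\to\infty$ monotone convergence via $\phi_k(z)\uparrow|z|$ finishes the identification.

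The diffusion and noise terms are dispatched by the lemmas already in place. For $I_1$, Lemma~\ref{l4.2} together with the comparison $|J''(x)|\le C_2e^{-|x|}\le (C_2/c_0)J(x)$ extracted from \eqref{eq:J} yields, after $m\to\infty$, the bound $I_1^{m,k,\mathbbm{i}}\le K\int_0^t\mathbbm{E}\langle|v_s^{\mathbbm{i}}|,J\rangle_1\,ds$; this is precisely where the exponential weight converts the Laplacian into the same weighted norm. For $I_2$, the elementary inequalities $0\le\phi_k'(z)z\le|z|$ give $I_2^{m,k,\mathbbm{i}}\le\int_0^t\mathbbm{E}\langle|v_s^{\mathbbm{i}}|,J\rangle_1\,ds$ in the limit, and $b_{\mathbbm{i}}I_2\le|b_{\mathbbm{i}}|I_2$ absorbs it into the same form regardless of the sign of $b_{\mathbbm{i}}$. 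The critical square-root noise term $I_3$ is controlled by Lemma~\ref{l4.3}: letting $m\to\infty$ first so that $\phi_k''(\langle v_s^{\mathbbm{i}},\Phi_m\rangle)\langle|v_s^{\mathbbm{i}}|,\Phi_m\rangle\to\phi_k''(v_s^{\mathbbm{i}}(x))|v_s^{\mathbbm{i}}(x)|$, and then invoking $|z|\phi_k''(z)\le 2k^{-1}$, gives $\limsup_m I_3^{m,k,\mathbbm{i}}\le \tfrac{8t}{k}\int_{\mathbbm{R}}J(x)\,dx\to0$ as $k\to\infty$. This vanishing is the heart of why the non-Lipschitz noise coefficient causes no difficulty, and it is exactly why the order of limits ($m$ before $k$) matters.

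The genuinely new ingredient is the migration term $I_4$, which couples the two colonies. Using $|\phi_k'|\le 1$, Fubini, and $J*\Phi_m\to J$, sending $m\to\infty$ in \eqref{I_31} and \eqref{I_32} produces bounds of the form $\int_{\mathbbm{R}}|\overline{\xi}_s(y)|J(y)\,dy$ and $|\overline{\xi}_s(\infty)|\int_{\mathbbm{R}}\dot{\chi}(y)J(y)\,dy$, respectively, both finite since $\chi$ is a finite measure and $J$ is integrable. The Lipschitz hypothesis then gives $|\overline{\xi}_s(y)|\vee|\overline{\xi}_s(\infty)|\le K\big[\rho(\mu_s^{\mathbbm{1}},\tilde\mu_s^{\mathbbm{1}})+\rho(\mu_s^{\mathbbm{2}},\tilde\mu_s^{\mathbbm{2}})\big]$, and since $e^{-|x|}\le J(x)/c_0$ we have $\rho(\mu_s^{\mathbbm{i}},\tilde\mu_s^{\mathbbm{i}})\le c_0^{-1}\langle|v_s^{\mathbbm{i}}|,J\rangle_1$ by the definition \eqref{rho} of $\rho$. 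Hence both $I_4^{m,k,\mathbbm{1}}$ and $I_4^{m,k,\mathbbm{2}}$ are dominated by $K\int_0^t\mathbbm{E}\big[\langle|v_s^{\mathbbm{1}}|,J\rangle_1+\langle|v_s^{\mathbbm{2}}|,J\rangle_1\big]\,ds$.

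Collecting the four bounds and summing over $\mathbbm{i}=\mathbbm{1},\mathbbm{2}$ gives $\sum_{\mathbbm{i}}\mathbbm{E}\langle|v_t^{\mathbbm{i}}|,J\rangle_1\le K\int_0^t\sum_{\mathbbm{i}}\mathbbm{E}\langle|v_s^{\mathbbm{i}}|,J\rangle_1\,ds$, and Gronwall's inequality forces the left-hand side to vanish for every $t$. Since $J>0$, this yields $v_t^{\mathbbm{i}}=0$ a.e.\ in $y$, and continuity of the solutions upgrades this to $u_t^{\mathbbm{i}}=\tilde u_t^{\mathbbm{i}}$ for all $t$ and $y$ almost surely, which is the asserted pathwise uniqueness; via Propositions~\ref{prop11} and \ref{prop2} it also delivers uniqueness in law for MP~(\ref{mp1},\,\ref{mp2}). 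I expect the main obstacle to be the interchange-of-limits bookkeeping rather than any single estimate: one must justify every $m\to\infty$ passage inside $I_1$--$I_4$ (dominated convergence against the integrable weight $J$, continuity of $v_s^{\mathbbm{i}}$, and the convergences $\langle\,\cdot\,,\Phi_m(x-\cdot)\rangle\to\,\cdot\,(x)$ and $J*\Phi_m\to J$) carefully enough, and in the coupled setting these must be controlled uniformly across the two colonies so that the cross-colony Gronwall inequality genuinely closes.
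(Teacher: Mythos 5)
Your proposal follows essentially the same route as the paper's proof: the same decomposition from Proposition \ref{prop1}, the same use of Lemmas \ref{l4.2} and \ref{l4.3} with the limits taken in the order $m\to\infty$ then $k\to\infty$, the same treatment of the migration term via the Lipschitz hypothesis and the comparison $e^{-|x|}\le J(x)/c_0$, and the same joint two-colony Gronwall closure. The argument is correct as outlined.
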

\proof
The coincidence of $\left(u_t^{\mathbbm{1}},u_t^{\mathbbm{2}}\right)_{t\geq0}$ and $\left({\tilde{u}}_t^{\mathbbm{1}},{\tilde{u}}_t^{\mathbbm{2}}\right)_{t\geq0}$ 
is sufficient to the pathwise uniqueness of SPDEs (\ref{eq:spdes}). 
Subsequently, we estimate the values of $I_{\ell}^{m,k,\mathbbm{i}}$ with $\ell=1,2,3,4$ and $\mathbbm{i} = \mathbbm{1},\mathbbm{2}.$ 
Since
\begin{equation*}
\lim_{m\rightarrow\infty}\< v_s^{\mathbbm{i}},\Phi_m\(x-\cdot\)\> =v_s^{\mathbbm{i}}\(x\)\qquad\text{and}\qquad \lim_{m\rightarrow\infty}\< |v_s^{\mathbbm{i}}|,\Phi_m\(x-\cdot\)\>=|v_s^\mathbbm{i}\(x\)|
\end{equation*}
for Lebesgue-a.e. $x$ and any $s\geq 0$ almost surely.  By Lemma~\ref{l4.2} and dominated convergence theorem, we have
\begin{equation*}
\limsup_{k,m\rightarrow\infty}2I_1^{m,k,\mathbbm{i}}\leq \mathbbm{E}\[\int_0^t\int_{\mathbbm{R}}|v_s^{\mathbbm{i}}\(x\)|\cdot |J^{''}\(x\)|dx ds\].
\end{equation*}
By (\ref{eq:J}), there exists a constant $K$ such that
\begin{equation}\label{eq:I_1}
\limsup_{k,m\rightarrow\infty}2I_1^{m,k,\mathbbm{i}}\leq K\mathbbm{E}\[\int_0^t\int_{\mathbbm{R}}|v_s^{\mathbbm{i}}\(x\)|\cdot |J\(x\)|dx ds\].
\end{equation}
Using $|\phi_k^{'}\(z\)|\leq 1$ and dominated convergence theorem, we can easily get
\begin{equation}\label{eq:I_2}
\limsup_{k,m\rightarrow\infty}\left|I_2^{m,k,\mathbbm{i}}\right|\leq \mathbbm{E}\[\int_0^t\int_{\mathbbm{R}}|v_s^{\mathbbm{i}}\(x\)|\cdot |J\(x\)|dx ds\].
\end{equation}
Recall that $\phi_k^{''}\(z\)\left|z\right|\leq 2k^{-1},$ by Lemma~\ref{l4.3} one shall check that
\begin{equation}\label{eq:I_4}
\begin{split}
\limsup_{m\rightarrow\infty}I_3^{m,k,\mathbbm{i}}\leq&K\mathbbm{E}\[\int_0^t\int_{\mathbbm{R}}\phi_k^{''}\big(v_s^{\mathbbm{i}}\(x\)\big)
\big|v_s^{\mathbbm{i}}\(x\)\big|J\(x\)dxds\]=O\(k^{-1}\).
\end{split}
\end{equation}
Recall that $\chi$ is a finite measure on $\mathbb{R}$ and $|\phi_k^{'}\(z\)|\leq 1$. By \eqref{I_31},\,\eqref{I_32} we have
\begin{equation}\label{eq:I_3}
\begin{split}
\limsup_{k,m\rightarrow\infty}|I_4^{m,k,\mathbbm{i}}|\leq&\,\, \mathbbm{E}\[\int_0^t\int_{\mathbbm{R}}
\big[|\overline{\xi}_s(x)| + |\overline{\xi}_s(\infty)|\big]\cdot |J\(x\)|dx ds\]\\
\leq&\,\,K\int_0^t\mathbbm{E}\[\big[\rho\({\mu}_s^{\mathbbm{1}},\tilde{\mu}_s^{\mathbbm{1}}\)+\rho\({\mu}_s^{\mathbbm{2}},\tilde{\mu}_s^{\mathbbm{2}}\)\big]ds\]\\
\leq &\,\,K\mathbbm{E}\[\int_0^t\int_{\mathbbm{R}}\(|v_s^{\mathbbm{1}}\(x\)|+|v_s^{\mathbbm{2}}\(x\)|\)J\(x\)dxds\].
\end{split}
\end{equation}
By Proposition~\ref{prop1} and putting \eqref{eq:I_1}-\eqref{eq:I_3} together, one can see that
\begin{equation*}
\mathbbm{E}\[\int_{\mathbbm{R}}\(\left|v_t^{\mathbbm{1}}\(x\)\right|+\left|v_t^{\mathbbm{2}}\(x\)\right|\)J\(x\)dx\]\leq
K\mathbbm{E}\[\int_0^t\int_{\mathbbm{R}}\(\left|v_s^{\mathbbm{1}}\(x\)\right|+\left|v_s^{\mathbbm{2}}\(x\)\right|\)J\(x\)dx ds\].
\end{equation*}
Then Gronwall's inequality implies that
\beqnn
\mathbbm{E}\[\int_{\mathbbm{R}}\(\left|v_t^{\mathbbm{1}}\(x\)\right|+\left|v_t^{\mathbbm{2}}\(x\)\right|\)J\(x\)dx\]=0
\eeqnn
 for any $t \ge 0$
and the pathwise uniqueness follows.
\qed


\end{document}